\def\l@section{\@tocline{1}{0pt}{1pc}{}{}}
\def\l@subsection{\@tocline{2}{0pt}{1pc}{4.6em}{}}
\def\l@subsubsection{\@tocline{3}{0pt}{1pc}{7.6em}{}}
\renewcommand{\tocsection}[3]{%
  \indentlabel{\@ifnotempty{#2}{\makebox[2.3em][l]{%
    \ignorespaces#1 #2.\hfill}}}#3}
\renewcommand{\tocsubsection}[3]{%
  \indentlabel{\@ifnotempty{#2}{\hspace*{2.3em}\makebox[2.3em][l]{%
    \ignorespaces#1 #2.\hfill}}}#3}
\renewcommand{\tocsubsubsection}[3]{%
  \indentlabel{\@ifnotempty{#2}{\hspace*{4.6em}\makebox[3em][l]{%
    \ignorespaces#1 #2.\hfill}}}#3}
\newtheorem{theorem}{Theorem}[section]
\newtheorem{proposition}[theorem]{Proposition}
\theoremstyle{definition}
\theoremstyle{remark}
\numberwithin{equation}{section}
\newcommand{\R}{\mathbb{R}}
\newcommand{\N}{\mathbb{N}}
\newcommand{\X}{\mathrm{X}}
\newcommand{\Y}{\mathrm{Y}}
\newcommand{\sign}{\mathrm{sign}}
\renewcommand{\S}{\mathbf{S}}
\newcommand{\essinf}{\mathop{\rm ess\ inf}}
\newcommand{\esssup}{\mathop{\rm ess\ sup}}
\newcommand{\vep}{\varepsilon}
\renewcommand{\epsilon}{\vep}
\newcommand{\eps}{\vep}
\begin{document}

\title{Duality of ODE-determined norms}
%[ODE representation for varying exponent $L^p$ norms]

\author{Jarno Talponen}
\address{University of Eastern Finland\\Institute of Mathematics\\Box 111\\FI-80101 Joensuu\\Finland}
\email{talponen@iki.fi}

\keywords{Banach function space, Orlicz space, Nakano space, $L^p$ space, varying exponent, variable exponent, differential equation, ODE, weak solution}
\subjclass[2010]{Primary 46E30; 34A12; Secondary 46B10; 31B10}
\date{\today}

\begin{abstract}
Recently the author initiated a novel approach to varying exponent Lebesgue space $L^{p(\cdot)}$ norms. In this approach the norm is defined by means of weak solutions to suitable first order ordinary differential equations (ODE). The resulting norm is equivalent with constant $2$ to a corresponding Nakano norm but the norms do not coincide in general and thus their isometric properties are different. In this paper the duality of these ODE-determined $L^{p(\cdot)}$ spaces is investigated. It turns out that the duality of the classical $L^p$ spaces generalizes nicely to this class of spaces. This duality is obtained 
in the isometric sense. The superreflexivity of these spaces is characterized under the anticipated conditions. A kind of universal space construction is also given for these spaces.
\end{abstract}

\maketitle

\section{Introduction}
The author recently introduced a means to construct and analyze varying exponent $L^{p(\cdot)} [0,1]$ norms by applying 
Carath\'eodory's weak solutions to suitable ordinary differential equations (ODE). Here we investigate the duality of such Banach function spaces. This leads to looking at the geometry and duality of Banach spaces in terms of the properties of the corresponding differential equations. 

The classical Birnbaum-Orlicz norms were defined in the 1930's, and since then there have been various generalizations of these norms to several directions. Notable examples of norms and spaces carry names such as Amemiya, Besov,  Lizorkin, Lorentz, Luxemburg, Musielak, Nakano,  Orlicz,  Triebel, Zygmund, see e.g. \cite{bo}, \cite{lux}, \cite{mus}. These norms have been recently applied to other areas 
of mathematics as well as to some real-world applications, see e.g. \cite{rr2}. Roughly speaking, these norms can be viewed as belonging to a family of derivatives of the Minkowski functional.
This kind of approach leads to several varying exponent $L^{p(\cdot)}$ type constructions, e.g. for sequence spaces, Lebesgue spaces, Hardy spaces and Sobolev spaces. There is a vast literature on these topics, see \cite{kovacik}, \cite{LT}, \cite{nakai} and \cite{rr} for samples and further references. There are also other ways of looking at the varying exponent $L^p$ spaces, such as the Marcinkiewicz space,
whose approach differs from the one mentioned above, see \cite{mar}.

Let us recall that the general Nakano or Musielak-Orlicz type norms are defined as follows:
\[\|f\|=\inf \left\{\lambda>0 \colon \int_\Omega \phi\left(\frac{|f(t)|}{\lambda},t\right)\ dm(t)\leq 1\right\}.\]
Here $\phi$ is a positive function satisfying suitable structural conditions. For instance, $\phi(s,t)=s^{p(t)}$, or 
$\psi(s,t)=\frac{s^{p(t)}}{p(t)}$, $1\leq p(\cdot) <\infty$, produces a norm that can be seen as a varying exponent $L^p$ norm. In the latter case we use the name \emph{Nakano norm} (cf. \cite{JKL,Maligranda}), which is of particular interest in this paper.

The construction of the $L^{p(\cdot)} [0,1]$ norms studied in this paper is fundamentally different from the above Minkowski functional derivatives and was introduced in \cite{talponen3} as a 'continuous version' of certain sequence spaces. These sequence spaces can be described as varying exponent $\ell^p$ spaces, or $\ell^{p(\cdot)}$ spaces, which first appear in \cite{Sobczyk} and were later studied in \cite{talponen}, cf. \cite{kalton1}. The construction of these spaces is rather natural and local in its nature. 

The accumulation of the norm is captured by a suitable ODE in such a way that its weak solution, $\varphi_f \colon [0,1]\to [0,\infty)$, shall represent the norm as follows:
\begin{equation}\label{eq: main}
\varphi_f (t)=\|1_{[0,t]}f\|,
\end{equation}
so that in particular $\varphi_ f (0)=0$ and $\varphi_ f (1)=\|f\|$. This absolutely continuous function 
obeys the following ODE:
\[\varphi_f (0)=0 ,\ \varphi_{f}'(t)=\frac{|f(t)|^{p(t)}}{p(t)}\varphi(t)^{1-p(t)}\quad \mathrm{for\ a.e.}\ t\in [0,1].\]
In the constant $p$ case the above ODE is a separable one, and solving it yields 
\[(\varphi_{f} (t))^p = \int_{0}^t |f(s)|^p \ ds\]
which clearly coincides with the classical definition of the $L^p$ norm. The ODE-determined $L^{p(\cdot)}$ class is 
\[L^{p(\cdot)} = \{f \in L^0 \colon \varphi_f \ \text{exists\ and}\ \varphi_f (1)<\infty \}\]
as a set where we identify functions which coincide almost everywhere. For an unbounded exponent $p(\cdot)$ it may happen that $L^{p(\cdot)}$ is not a linear space, but if it is, then the solutions define a norm:
\[\|f\|_{L^{p(\cdot)}} := \varphi_f (1).\]
This is equivalent with constant $2$ to the particular Nakano norm (see $\psi$ above).

Thus, we continue the analysis of the $L^{p(\cdot)}$ spaces in the sense of \cite{talponen3}. Hopefully, some clean findings obtained here justify the fact that the definition of these ODE-determined spaces is rather natural. For instance, it turns out that the duality and the superreflexivity of these function spaces 
behave exactly in the anticipated way. Here the norm satisfies H\"{o}lder's inequality properly, i.e. without 
any additional constant, and the spaces $L^{p(\cdot)}$ and $L^{p^* (\cdot)}$ become isometrically
dual to each other in case $1 < \mathrm{ess\ inf}\ p(\cdot) \leq \mathrm{ess\ sup}\ p(\cdot) <\infty$.

\subsection{Preliminaries and auxiliary results}

We will usually consider the unit interval $[0,1]$ endowed with the Lebesgue measure $m$. 
Here for almost every (a.e.) refers to $m$-a.e., unless otherwise specified. 
Denote by $L^0$ the space of Lebesgue-to-Borel measurable functions on the unit interval.
We denote by $\ell^0 (\N)$ the vector space of sequences of real numbers with point-wise operations.
The monographs \cite{DE_book}, \cite{FA_book} and \cite{LT} provide suitable general background information. The paper \cite{talponen3} provides the necessary prerequisite background information, including definitions, basic results and the heuristic motivation of the construction, cf. \cite{talponen2}. 

We will study Carath\'eodory's weak formulation to ODEs, that is, in the sense of Picard type integral formulation, where solutions are required to be only absolutely continuous. This means that, given an ODE 
\[\varphi(0)=x_0 ,\ \varphi'(t)=\Theta(\varphi(t),t),\quad \mathrm{for\ a.e.}\ t\in [0,1],\]
we call $\varphi$ a weak solution in the sense of Carath\'eodory if $\varphi$ is absolutely continuous, $t\mapsto  \Theta(\varphi(t),t)$ is measurable and 
\[\varphi(T)= x_0 + \int_{0}^T \Theta(\varphi(t),t)\ dt\]
holds for all $T\in [0,1]$, where the integral is the Lebesgue integral. In what follows, we will refer to Carath\'eodory's solutions simply as solutions.

Whenever we make a statement about a derivative we implicitly state that it exists. 
We will write $F\leq G$, involving elements of $L^0$, if $F(t)\leq G(t)$ for a.e. $t\in [0,1]$.
We denote the characteristic function or indicator function by $1_A$ defined by $1_A (x)=1$ if $x\in A$
and $1_A (x)=0$ otherwise.

We will frequently calculate terms of the form $(a^p + b^p)^{\frac{1}{p}}$ where $a,b\geq 0$ and 
$1\leq p <\infty$. We will adopt from \cite{talponen} the following shorthand notation for this:
\[a \boxplus_p b = (a^p + b^p)^{\frac{1}{p}}.\]
This defines a commutative semi-group on $\R_+$, in particular, the associativity
\[a \boxplus_p (b \boxplus_p c )=(a \boxplus_p b) \boxplus_p c ,\]
is useful. In taking a sequence of $\boxplus_p$ or $\oplus_p$ operations we always perform the operations
from left to right, unless there are parentheses indicating another order.
We will also use the following operation:
\[\bigboxplus_{1\leq i\leq n}^p x_i = x_1 \boxplus_p x_2 \boxplus_p \ldots \boxplus_p x_n  =\left(\sum_{i=1}^n x^p _i \right)^{\frac{1}{p}},\quad x_1 , \ldots ,x_n \in \R_+ .\]

The space $\ell^{p(\cdot)} \subset \ell^0$, $p\colon \N \to [1,\infty)$, consists of those elements $(x_n)$ such that the 
following limit of a non-decreasing sequence exists and is finite:
\[\lim_{n\to \infty} (\ldots (((|x_1 | \boxplus_{p(1)} |x_2 |)\boxplus_{p(2)} |x_3 |) \boxplus_{p(3)} |x_4 |)\boxplus_{p(4)}\ldots \boxplus_{p(n-1)}|x_n |)\boxplus_{p(n)}|x_{n+1}|\]
and the above limit becomes the norm of the space, see \cite{talponen}.

The author is grateful to Professors Pilar Cembranos and Jos\'{e} Mendoza for providing an argument of the following fact in a personal communication.

\begin{proposition} 
Let $1\leq p\leq r <\infty$ and $A_k = (a_{ij}^{(k)}) \in \ell^r (\ell^p )$, $k \in \N$, with 
non-negative entries and $a_{ij}^{(k)}a_{ij}^{(l)}=0$ for all $i,j,k,l\in \N$, $k\neq l$.
Then 
\[\left\|\sum_{k\in \N} A_k \right\|_{\ell^r (\ell^p )} \leq \bigboxplus_{k\in\N} ^p \|A_k\|_{\ell^r (\ell^p )} .\]
\end{proposition}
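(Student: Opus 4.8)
The plan is to reduce the estimate to a scalar inequality entry by entry and then sum carefully, exploiting the disjointness of the supports of the matrices $A_k$.

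First I would observe that since the entries $a_{ij}^{(k)}$ are non-negative and have pairwise disjoint supports (for fixed $(i,j)$ at most one $k$ gives a non-zero entry), the matrix $B = \sum_{k} A_k$ has entries $b_{ij} = \sum_k a_{ij}^{(k)}$ where at most one summand is non-zero; in particular $b_{ij}^p = \sum_k (a_{ij}^{(k)})^p$ for every $i,j$. Hence the inner $\ell^p$-norm of the $i$-th row of $B$ satisfies
\[\Big(\sum_j b_{ij}^p\Big)^{1/p} = \Big(\sum_j \sum_k (a_{ij}^{(k)})^p\Big)^{1/p} = \bigboxplus_{k\in\N}^p \Big(\sum_j (a_{ij}^{(k)})^p\Big)^{1/p},\]
so if I write $c_i = \big(\sum_j b_{ij}^p\big)^{1/p}$ and $c_i^{(k)} = \big(\sum_j (a_{ij}^{(k)})^p\big)^{1/p}$, then $c_i^r = \big(\sum_k (c_i^{(k)})^p\big)^{r/p}$ and I must show $\big(\sum_i c_i^r\big)^{1/r} \le \big(\sum_k (\sum_i (c_i^{(k)})^r)^{p/r}\big)^{1/p}$, i.e. exactly the claimed inequality for the sequences $(c_i^{(k)})_i$. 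Thus the whole statement collapses to the case $p \le r$ of the following: for non-negative reals, $\big\|\big(\bigboxplus_k^p c_i^{(k)}\big)_i\big\|_{\ell^r} \le \bigboxplus_k^p \big\|(c_i^{(k)})_i\big\|_{\ell^r}$.

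The core of the argument is therefore this vector-valued Minkowski-type inequality, which is the triangle inequality for the $\ell^{r/p}$-norm after raising everything to the power $p$: setting $d_i^{(k)} = (c_i^{(k)})^p \ge 0$ and $q = r/p \ge 1$, I need $\big\|\sum_k d^{(k)}\big\|_{\ell^q} \le \sum_k \|d^{(k)}\|_{\ell^q}$ for non-negative sequences $d^{(k)} \in \ell^q$, which is simply the ordinary triangle inequality in $\ell^q$ (no disjointness needed here — disjointness was already used to pass from $B$ to the $c_i^{(k)}$). Unwinding, $\big(\sum_i (\sum_k d_i^{(k)})^q\big)^{1/q} \le \sum_k (\sum_i (d_i^{(k)})^q)^{1/q}$; raising to the power $1/p$ and recalling $q/p$... more precisely taking both sides to the power $1/p$ gives $\big(\sum_i c_i^r\big)^{1/r} \le \big(\sum_k \|(c_i^{(k)})_i\|_{\ell^r}^p\big)^{1/p}$ once one checks that $\sum_k (\sum_i (d_i^{(k)})^q)^{1/q} = \sum_k \|(c_i^{(k)})_i\|_{\ell^r}^p$, which holds since $(d_i^{(k)})^q = (c_i^{(k)})^{pq} = (c_i^{(k)})^r$.

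The main obstacle, such as it is, is bookkeeping: one must be careful that the hypothesis $p \le r$ is exactly what makes $q = r/p \ge 1$ so that the $\ell^q$ triangle inequality is available, and one must handle convergence of the infinite sum $\sum_k A_k$ in $\ell^r(\ell^p)$ — this follows because the partial sums are monotone in each coordinate (disjoint non-negative supports), so by the monotone convergence theorem the $\ell^r(\ell^p)$-norm of the partial sums increases to that of the limit, and the bound for finitely many $A_k$ (proved as above) passes to the limit. I would organize the write-up as: (1) reduce to non-negative scalar double sequences via the disjoint-support identity for $b_{ij}^p$; (2) state and prove the $\ell^q$ triangle inequality reformulation; (3) address the infinite-sum convergence by monotonicity and monotone convergence.
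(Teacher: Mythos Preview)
Your argument is correct and is essentially the same as the paper's: both use disjointness of supports to write $b_{ij}^p = \sum_k (a_{ij}^{(k)})^p$, then raise to the $p$-th power and apply the triangle inequality in $\ell^{r/p}$ (valid since $r/p \geq 1$). The paper compresses this into a single displayed chain of equalities and one inequality, whereas you introduce the intermediate sequences $c_i^{(k)}$ and $d_i^{(k)}$ and add a remark on convergence via monotonicity, but the mathematical content is identical.
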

\begin{proof}
With the above assumptions we have
\begin{multline*}
\left\|\sum_{k=1}^\infty A_k \right\|_{\ell^r (\ell^p )} ^p =\left(\sum_{i=1}^\infty \left(\sum_{j=1}^\infty \sum_{k=1}^\infty |a_{ij}^{(k)}|^p \right)^{\frac{r}{p}}\right)^{\frac{p}{r}}
=  \left\|\sum_{k=1}^\infty \left(\sum_{j=1}^\infty |a_{ij}^{(k)}|^p \right)_{i=1}^\infty \right\|_{\ell^{\frac{r}{p}}}\\
\leq \sum_{k=1}^\infty \left\|\left(\sum_{j=1}^\infty |a_{ij}^{(k)}|^p \right)_{i=1}^\infty \right\|_{\ell^{\frac{r}{p}}} = \sum_{k=1}^\infty \left(\sum_{i=1}^\infty \left(\sum_{j=1}^\infty |a_{ij}^{(k)}|^p \right)^\frac{r}{p} \right)^\frac{p}{r}=\sum_{k=1}^\infty \|A_k\|_{\ell^r (\ell^p )}^p .
\end{multline*}
\end{proof}

This in turn implies the following fact by decomposing the matrix to columns.
        
\begin{proposition}\label{prop: boxplus}
If $1\leq p\leq r <\infty$ and $(x_{ij})$ is an infinite matrix of non-negative numbers, then 
\[\bigboxplus^r _{j\in\N} \bigboxplus^p _{i\in\N} x_{ij} \leq  \bigboxplus^p _{i\in\N} \bigboxplus^r _{j\in\N} x_{ij}.\]
Equivalently, taking the transpose $T \colon (x_{ij}) \mapsto (x_{ji})$ defines a norm-$1$ operator $\ell^p (\ell^r )\to \ell^r (\ell^p )$.
\end{proposition}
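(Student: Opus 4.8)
The plan is to obtain this as an immediate corollary of the preceding Proposition by decomposing the transposed matrix into columns, exactly as the remark before the statement anticipates. The first step is pure bookkeeping: unwinding the definitions of the norms of $\ell^p(\ell^r)$ and $\ell^r(\ell^p)$ one sees that, writing $y=Tx$, i.e. $y_{ji}=x_{ij}$,
\[\|y\|_{\ell^r(\ell^p)}=\left(\sum_{j}\left(\sum_{i}x_{ij}^p\right)^{r/p}\right)^{1/r}=\bigboxplus^r_{j\in\N}\bigboxplus^p_{i\in\N}x_{ij},\qquad \|x\|_{\ell^p(\ell^r)}=\left(\sum_{i}\left(\sum_{j}x_{ij}^r\right)^{p/r}\right)^{1/p}=\bigboxplus^p_{i\in\N}\bigboxplus^r_{j\in\N}x_{ij},\]
so the displayed inequality is literally the assertion $\|Tx\|_{\ell^r(\ell^p)}\le\|x\|_{\ell^p(\ell^r)}$, and it suffices to prove the latter for every non-negative matrix $x$. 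If $\|x\|_{\ell^p(\ell^r)}=\infty$ there is nothing to prove, so we may assume it is finite.

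Next I would split $y=Tx$ along its inner ($\ell^p$) coordinate: for each $k\in\N$ let $A_k$ be the matrix with $(A_k)_{ji}=y_{ji}$ when $i=k$ and $(A_k)_{ji}=0$ otherwise. Then the $A_k$ have non-negative entries, pairwise disjoint supports (so the condition $a^{(k)}_{ji}a^{(l)}_{ji}=0$ for $k\ne l$ holds trivially), and $\sum_{k}A_k=y$. Since $\|x\|_{\ell^p(\ell^r)}<\infty$, each $\|A_k\|_{\ell^r(\ell^p)}$ is finite, hence $A_k\in\ell^r(\ell^p)$, and the hypothesis $1\le p\le r<\infty$ is exactly what the preceding Proposition requires; applying it gives
\[\|y\|_{\ell^r(\ell^p)}=\left\|\sum_{k\in\N}A_k\right\|_{\ell^r(\ell^p)}\le\bigboxplus^p_{k\in\N}\|A_k\|_{\ell^r(\ell^p)}.\]

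It remains to identify the terms on the right. Because $A_k$ has a single nonzero column, the inner $\ell^p$-summation collapses to one summand, so $\|A_k\|_{\ell^r(\ell^p)}=\left(\sum_{j}x_{kj}^r\right)^{1/r}=\bigboxplus^r_{j\in\N}x_{kj}$, the $\ell^r$-norm of the $k$-th row of $x$; consequently $\bigboxplus^p_{k\in\N}\|A_k\|_{\ell^r(\ell^p)}=\bigboxplus^p_{k\in\N}\bigboxplus^r_{j\in\N}x_{kj}=\bigboxplus^p_{i\in\N}\bigboxplus^r_{j\in\N}x_{ij}$, which is precisely the claimed upper bound. That the operator norm equals $1$, and not merely $\le 1$, follows by testing $T$ on a matrix supported at a single coordinate pair, where both sides coincide. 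I do not expect a genuine obstacle here; the only points requiring care are keeping straight which index is the outer one and which the inner one in $\ell^p(\ell^r)$ versus $\ell^r(\ell^p)$, and checking that the column decomposition really has pairwise disjoint supports so that the previous Proposition applies verbatim.
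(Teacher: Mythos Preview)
Your proposal is correct and follows exactly the route indicated in the paper: the paper's entire proof is the single remark that Proposition~1.2 follows from Proposition~1.1 ``by decomposing the matrix to columns,'' and you have carried out precisely that decomposition with the bookkeeping made explicit. The additional observation that testing on a matrix supported at a single entry shows the norm equals $1$ is a harmless clarification not spelled out in the paper.
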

\qed

The inequality in Proposition \ref{prop: boxplus} can be seen as a `distributive version' of the following fact appearing in \cite{talponen}:
\[a \boxplus_r (b \boxplus_p c ) \leq (a \boxplus_r b) \boxplus_p c ,\quad 1\leq p\leq r\leq \infty,\ a,b,c\in \R_+ .\]

In the context of function spaces an \emph{exponent} is a function $p \in L^0 [0,1]$ with $p\geq 1$.

Consider the following identity
\begin{equation}\label{eq: phiDelta}
\begin{split}
\varphi(t_0 +\Delta)&=(\varphi(t_0 )^{p(t_0 )}+\Delta |f(t_0 )|^{p(t_0 )})^{1/p(t_0 )},\\
&=\varphi(t_0 ) \boxplus_{p(t_0 )} \Delta^{1/p(t_0 )}|f(t_0 )|
\end{split}
\end{equation}
analogous to the $\ell^{p(\cdot)}$ construction. By taking the right derivative of \eqref{eq: phiDelta} we find a natural candidate for the norm-determining differential equation:
\begin{equation}\label{eq: diff1}
\frac{\partial^+}{\partial\Delta}\varphi(t_0 +\Delta)\bigg|_{\Delta=0}=\frac{|f(t_0 )|^{p(t_0 )}}{p(t_0 )}\varphi(t_0 )^{1-p(t_0 )}.
\end{equation}
Here we set $\Delta=0$, because we are interested in (infinitesimal) increments around $t_0$. 
So, the above equation is right if $f$ and $p$ are constant on the interval $[t_0 ,t_0 +\Delta]$, but the equation does \emph{not} concern the values of $f$, $\varphi$ and $p$ \emph{beyond} $t_0$.

In formulating the differential equation we do not require $f$ or $p$ to be continuous anywhere, but motivated by Lusin's theorem and related considerations we will use the above formula in any case and aim to define
$\varphi$ by 
\begin{equation} 
\varphi(0)=0 ,\ \varphi'(t)=\frac{|f(t)|^{p(t)}}{p(t)}\varphi(t)^{1-p(t)}\quad \mathrm{for\ a.e.}\ t\in [0,1].
\end{equation}
This formulation has the drawback that $0^{1-p(t)}$ is not defined. Also, it has a trivial solution $\varphi \equiv 0$, regardless of the values of $f$ if we use the convention $0^0 =0$ and $p\equiv 1$.
The behavior of the solutions is difficult to deal with in the case where $\varphi(t)$ is small and $p(t)$ is large.

To fix these issues, we will consider \emph{stabilized} solutions to the above initial value problem. Namely, we will use initial values $\varphi(0)={x_0}>0$, and to correct the error incurred we let ${x_0}\searrow 0$. The unique solutions $\varphi_{{x_0}}$ decreasingly converge point-wise to $\varphi$ which again satisfies the same ODE (where applicable). So, this procedure yields a unique maximal solution $\varphi$ which we will formulate, by slight abuse of notation, as
\begin{equation}\label{eq: def}
\varphi(0 )=0^+ ,\ \varphi'(t)=\frac{|f(t)|^{p(t)}}{p(t)}\varphi(t)^{1-p(t)}\quad \mathrm{for\ a.e.}\ t\in [0,1].
\end{equation}
There is more to the above procedure than merely picking a maximal solution; it turns out that in many situations it is convenient to look at positive-initial-value solutions first.
      
The above ODE is a separable one for a constant $p(\cdot)\equiv p$, $1\leq p<\infty$, and solving it yields 
\[(\varphi_f (1))^{p}=\int_{0}^{1}|f(t)|^p \ dt,\] 
compatible with the classical definition of the $L^p$ norm.
If $p(\cdot)$ is locally bounded and $|f(t)|^{p(t)}$ is locally integrable, then Picard iteration performed locally yields a unique solution for each initial value $\varphi(0)=a>0$.

We define the varying exponent space $L^{p(\cdot)}\subset L^0$ as the space of those functions $f\in L^0$ such that $\varphi_f (1)<\infty$ where $\varphi_f$ exists as an absolutely continuous solution to \eqref{eq: def} and the norm of $f$ will be $\varphi_f (1)$, see \cite{talponen3}. As usual, we use point-wise linear operations defined almost everywhere and we identify functions which coincide almost everywhere. As observed in \cite{talponen3}, a class $L^{p(\cdot)}$ need not always be a linear space. However, if 
$\esssup p < \infty$, for instance, then $L^{p(\cdot)}$ is a Banach space. In such a case the norm 
is equivalent to the Nakano norm appearing in the introduction, although these norms do not coincide in general. 

The issues with the linearity can be circumvented by extending the class. 
We consider functions $f\in L^0$ and define
\[N(f) :=\sup_{n\in\N} \|1_{p(\cdot)\leq n}\ f\|_{L^{p(\cdot)}}.\]
The extended function space $\widetilde{L}^{p(\cdot)}$ is the class of all functions $f$ with 
$N(f)<\infty$, and then $N$ becomes a norm for this space and $\widetilde{L}^{p(\cdot)}$ is in fact 
a Banach space. Let us take a subclass $L^{p(\cdot)}_0 \subset L^{p(\cdot)}$ defined as a closed subspace
\[L^{p(\cdot)}_0 := \overline{\left\{1_{p(\cdot )\leq n}\ f\colon f\in L^{p(\cdot)},\ n\in \N\right\}}\subset 
\widetilde{L}^{p(\cdot)}.\]
Then $L^{p(\cdot)}_0$ becomes a Banach space with the ODE-determined norm. 
The following fact, which can be obtained easily from the essentially bounded exponent case (see the results in \cite{talponen3}), gathers many cases where $L^{p(\cdot)}$ itself is in fact linear. 
\begin{theorem}
Let $p\in L^0 [0,1]$, $p(\cdot) \geq 1$. Assume that there is a family $\Gamma$ of mutually disjoint open intervals 
$I \subset (0,1)$ such that the following conditions hold:
\begin{enumerate}
\item For each $I \in \Gamma$ the exponent $p(\cdot)$ is essentially bounded on $I$;
\item $m\left(\bigcup \Gamma \right)=1$, 
\item For each $\Gamma_0 \subset \Gamma$ with $s=\sup \bigcup \Gamma_0 <1$, or $s=0$, there is 
$I \in \Gamma$ such that $\inf I =s$. That is, $\Gamma$ is `well-ordered'. 
\end{enumerate}
Then $L^{p(\cdot)} = \widetilde{L}^{p(\cdot)}$ and is in particular a Banach space. 
\end{theorem}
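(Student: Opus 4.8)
The plan is to establish the nontrivial inclusion $\widetilde{L}^{p(\cdot)}\subseteq L^{p(\cdot)}$ together with $N(f)=\|f\|_{L^{p(\cdot)}}$ on it; the reverse inclusion (and $N(f)\le\|f\|_{L^{p(\cdot)}}$) is immediate, since each $1_{p(\cdot)\le n}f$ is pointwise dominated by $f$ and the solution map is monotone in $|f|$, so $\varphi_{1_{p(\cdot)\le n}f}\le\varphi_f$ on $[0,1]$. Hence fix $f$ with $N(f)<\infty$; one must produce the maximal solution $\varphi_f$ of \eqref{eq: def} on all of $[0,1]$ and show $\varphi_f(1)=N(f)$. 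As $\widetilde{L}^{p(\cdot)}$ is already known to be a Banach space, the identity $L^{p(\cdot)}=\widetilde{L}^{p(\cdot)}$ then yields the final clause for free.

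First I would unpack the hypotheses on $\Gamma$. Being a family of pairwise disjoint open intervals of positive length, $\Gamma$ is countable, and the member of $\Gamma$ with a prescribed left endpoint, if any, is unique. Condition (3) then lets me enumerate $\Gamma=\{I_\alpha\}_{\alpha<\mu}$ along a countable ordinal $\mu$, with an increasing transfinite sequence of checkpoints $0=s_0\le s_1\le\cdots$ such that $I_\alpha=(s_\alpha,s_{\alpha+1})$, consecutive intervals abut, $s_\lambda=\sup_{\alpha<\lambda}s_\alpha$ at limit stages, and (by condition (2)) $\sup_{\alpha<\mu}s_\alpha=1$ while $\bigcup_{\alpha<\lambda}I_\alpha$ has full measure in $[0,s_\lambda]$ for every $\lambda$. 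This is exactly what ``well-orderedness'' buys us, and it is what prevents the recursion below from stalling at a point where $p(\cdot)$ is unbounded on every neighbourhood.

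The heart of the proof is one transfinite induction proving, for each $\alpha$: the function $\varphi_f$ has been defined on $[0,s_\alpha]$ as a nondecreasing absolutely continuous solution of \eqref{eq: def}, and $\varphi_f=\sup_n\psi_n$ there, where $g_n:=1_{p(\cdot)\le n}f$ and $\psi_n:=\varphi_{g_n}$. Since $N(f)<\infty$ each $g_n$ lies in $L^{p(\cdot)}$, so $\psi_n$ exists on $[0,1]$ with $\psi_n(1)=\|g_n\|_{L^{p(\cdot)}}\le N(f)$, and $(\psi_n)$ is pointwise nondecreasing in $n$; in particular $\sup_n\psi_n\le N(f)$ throughout. \emph{Successor step:} because $p(\cdot)$ is essentially bounded on $I_\alpha$, the essentially-bounded-exponent theory of \cite{talponen3} produces a unique absolutely continuous solution of \eqref{eq: def} on $[s_\alpha,s_{\alpha+1}]$ with initial value $\varphi_f(s_\alpha)$ (with the $0^+$ convention of \eqref{eq: def} when this value is $0$, which happens only if $f=0$ a.e.\ on $[0,s_\alpha]$); this extends $\varphi_f$. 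On $I_\alpha$, for all large $n$ one has $g_n=f$ a.e., so $\psi_n$ solves the \emph{same} ODE there, from initial values $\psi_n(s_\alpha)\uparrow\varphi_f(s_\alpha)$; passing to the limit in the Picard integral — using that $y\mapsto y^{1-p(t)}$ is nonincreasing, together with dominated convergence with an $L^1(I_\alpha)$ majorant coming from one of the $\psi_n'$ — identifies $\sup_n\psi_n$ on $I_\alpha$ as an absolutely continuous solution with initial value $\varphi_f(s_\alpha)$, hence equal to the just-constructed $\varphi_f$ by uniqueness (the trivial prefix where $f$ still vanishes is dealt with separately). \emph{Limit step:} set $\varphi_f(s_\lambda):=\sup_{\alpha<\lambda}\varphi_f(s_\alpha)$, which is $\le N(f)$ by the inductive hypothesis and equals $\sup_n\psi_n(s_\lambda)$ by continuity and monotonicity of the $\psi_n$; absolute continuity up to $s_\lambda$ follows by telescoping the nonnegative increments $\varphi_f(s_{\alpha+1})-\varphi_f(s_\alpha)=\int_{I_\alpha}\varphi_f'$ over $\alpha<\lambda$, noting $\int_0^{s_\lambda}\varphi_f'\le\liminf_n\psi_n(s_\lambda)\le N(f)$ by Fatou's lemma.

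Running the induction to $\mu$ gives $\varphi_f$ on $[0,1)$, which extends to an absolutely continuous solution on $[0,1]$ with $\varphi_f(1):=\sup_{\alpha<\mu}\varphi_f(s_\alpha)=\sup_n\sup_{\alpha<\mu}\psi_n(s_\alpha)=\sup_n\psi_n(1)=N(f)$; thus $f\in L^{p(\cdot)}$ with $\|f\|_{L^{p(\cdot)}}=N(f)$, as required. I expect the main obstacle to be precisely the identification $\varphi_f=\sup_n\psi_n$ across a limit checkpoint $s_\lambda$: the successor steps are a routine gluing of bounded-exponent solutions, but recognising the pointwise supremum of the $\psi_n$ as a bona fide Carath\'eodory solution near a point where $p(\cdot)$ blows up — rather than merely a monotone limit of solutions — is where the dominated-convergence argument, and the role of condition (3) in keeping $s_\lambda$ at the left edge of a fresh interval of $\Gamma$, really do the work.
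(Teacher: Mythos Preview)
Your proposal is correct and follows essentially the same route as the paper: a transfinite induction along the well-ordered family $\Gamma$, extending the solution $\varphi_f$ across each interval by invoking the essentially-bounded-exponent case from \cite{talponen3}. The paper's argument is actually considerably terser than yours --- it records only the successor step and leaves both the handling of limit stages and the identification $\varphi_f(1)=N(f)$ (your $\varphi_f=\sup_n\psi_n$ argument) implicit --- so your version fills in details the paper omits rather than diverging from it.
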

\begin{proof}
It is clear that $L^{p(\cdot)} \subset \widetilde{L}^{p(\cdot)}$. Therefore we are required to verify that for each 
$f\in \widetilde{L}^{p(\cdot)}$ there is a properly defined solution $\varphi_f$ such that 
\[\varphi_f (t)= \|1_{[0,t]} f\|_{\widetilde{L}^{p(\cdot)}}.\]
Clearly the above solution is defined in the first interval $I_0 \in \Gamma$, since the exponent is essentially bounded there 
(see \cite{talponen3}). Suppose that $\varphi_f $ has been properly defined on $[0,s )$, $0<s\leq 1$. If $s=1$ then 
there is nothing to prove. In case $s<1$ there is according to 
the assumptions on $\Gamma$ an interval $I \in \Gamma$ such that $\inf I \leq s < \sup I$. Then, again by virtue of the essentially bounded exponent on $I$, we may 
further extend the solution to $[0, \sup I ]$. Inductively, it follows that $\varphi_f $ can be properly defined on the whole 
unit interval.      
\end{proof}

Note that there is a more direct way to obtain H\"{o}lder's inequality, than the argument provided in \cite{talponen3}.
Namely, let $p(\cdot)>1$ be any measurable exponent and define the conjugate exponent $p^*$ by 
$\frac{1}{p(t)}+ \frac{1}{p^* (t)}=1$, let $f \in L^{p(\cdot)}$ and $g\in L^{p^* (\cdot)}$.
Consider any point $t_0 \in (0,1)$ such that $\int_{0}^{t_0} |fg|\ ds>0$, thus  
$\varphi_f (t_0 ),  \varphi_g (t_0 ) >0$. Then, using homogeneity, we may assume with out loss of generality that 
$ \varphi_f (t_0 ) =  \varphi_g (t_0 ) =1$ above. It follows by Young's inequality and the definition and normalizations 
of the solutions that 
\begin{multline*}
\frac{d}{dt} \int_{0}^{t} |fg|\ ds\ \bigg|_{t=t_0 } = |f(t_0 )g(t_0 )| \leq \frac{|f(t_0 )|^{p(t_0 )}}{p(t_0 )} 
+\frac{|g(t_0 )|^{p^* (t_0 )}}{p^* (t_0 )}\\ 
= \varphi'_f (t_0 ) + \varphi'_g (t_0 )  = (\varphi_f \varphi_g)' (t_0 ).
\end{multline*}
Since this holds for a.e. $t_0$ with $\int_{0}^{t_0} |fg|\ ds>0$, this proves H\"{o}lder's inequality.

\section{Duality}

Let $p\colon [0,1]\to (1,\infty)$ be a measurable function. Let $\X_n \subset L^{p(\cdot)}$, $n\in \N$, be the images of the contractive projections 
$P_n \colon   L^{p(\cdot)} \to X_n$, $(P_n f)(t) = 1_{1+ 1/n \leq p(t) \leq n} (t) f(t)$. Then in fact
\[L_{0}^{p(\cdot)}= \overline{\bigcup_n X_n}\subset L^{p(\cdot)}.\]

This is seen as follows: we claim that for each $f\in X$ we have 
\[\|f-P_n f\|+\|P_n f \|\to \|f\|,\quad n\to\infty.\] For fixed initial value $a>0$ and $\varphi_{f-P_n f}(0)=\varphi_{P_n f}(0)=\varphi_{f}(0)=a$ the analogous statement follows easily, 
since $\varphi^{1-p(t)}\to 1$ as $p(t)\searrow 1$. By the absolute continuity of the solutions we obtain that $\varphi_{P_n f}(1) \to \varphi_{f}(1)$ as $n\to\infty$ for any given initial value $a>0$. Thus 
$\varphi_{f-P_n f ,a} (1)\to a$ as $n\to \infty$ for any initial value $a>0$. By a diagonal argument we find a sequence $(m_n )_n$ such that $\varphi_{f-P_{m_n} f, 1/n} (1)\to 0$ as $n\to \infty$.
Since the solutions are non-decreasing with respect to their initial values, we obtain that $\|f - P_n f\|\to 0$ as $n\to\infty$.\qed

Let us 
denote by $J$ the `duality map' $J\colon L^{p(\cdot)} \to L^0$ , $p(\cdot)>1$,
\begin{equation}\label{eq: Jxt}
J(x)[t]=\sign(x(t))|x(t)|^{\frac{p(t)}{p^* (t)}},\quad p,p^* \in L^0 ,\ \frac{1}{p(t)}+\frac{1}{p^* (t)}=1.
\end{equation}

\begin{theorem}
If $1< \essinf_t p(t) \leq \esssup_t p(t) < \infty$ then for each $F\in (L^{p(\cdot)})^*$ there is $f\in L^{p^* (\cdot)}$ such that 
\[\langle F , x\rangle = \int x(t) f(t)\ dm(t),\quad \mathrm{for\ all}\  x\in L^{p(\cdot)} \] 
and the above duality induces an isometric isomorphism $(L^{p(\cdot)})^* \to L^{p^* (\cdot)}$.
Moreover, $(L_{0}^{p(\cdot)})^*$ is isometric to $\widetilde{L}^{p^* (\cdot)}$ with the above duality for a general $p\colon [0,1]\to (1,\infty)$.
\end{theorem}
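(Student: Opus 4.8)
The plan is to establish the duality in three stages: first the ``easy'' inequality via H\"older, then the surjectivity via a reduction to the essentially bounded exponent case through the projections $P_n$, and finally the two auxiliary identifications involving $L_0^{p(\cdot)}$ and $\widetilde L^{p^*(\cdot)}$. For the first stage, given $f \in L^{p^*(\cdot)}$, the functional $x \mapsto \int x f\,dm$ is bounded on $L^{p(\cdot)}$ with norm $\leq \|f\|_{L^{p^*(\cdot)}}$ directly from the H\"older inequality proved in the preliminaries (which holds with constant $1$). For the reverse inequality I would test this functional against $x = J^{-1}(f)$, i.e. the function $x(t) = \sign(f(t))|f(t)|^{p^*(t)/p(t)}$ suggested by the duality map $J$ in \eqref{eq: Jxt}; the pointwise identity $|x(t)f(t)| = |f(t)|^{p^*(t)} = |x(t)|^{p(t)}$ combined with the ODE characterization of the two norms should force $\int x f\,dm = \|x\|_{L^{p(\cdot)}}\,\|f\|_{L^{p^*(\cdot)}}$ exactly, giving the isometric embedding $L^{p^*(\cdot)} \hookrightarrow (L^{p(\cdot)})^*$. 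The cleanest way to see the last equality is probably to differentiate: $\frac{d}{dt}\int_0^t |xf|\,ds = (\varphi_x \varphi_{J(x)})'(t)$ pointwise a.e., exactly as in the H\"older computation but now with equality because Young's inequality is an equality at the conjugate point.

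For surjectivity, let $F \in (L^{p(\cdot)})^*$. Since $1 < \essinf p \leq \esssup p < \infty$, the whole space $L^{p(\cdot)}$ coincides with $L_0^{p(\cdot)} = \overline{\bigcup_n X_n}$ (in fact here $X_n = L^{p(\cdot)}$ for $n$ large, since $p$ is already sandwiched), so it suffices to reconstruct $f$ from the restrictions $F|_{X_n}$. On each $X_n$ the exponent is essentially bounded away from $1$ and from $\infty$, and on such a piece the ODE-determined norm is a genuine rearrangement-friendly Banach function norm; I would invoke the standard representation of bounded functionals on a Banach function space (or, more concretely, run the classical $L^p$-duality argument via Radon--Nikod\'ym applied to the measure $A \mapsto F(1_A g)$ for a suitable generating $g$) to obtain $f_n \in L^0$ supported on $\{1+1/n \le p \le n\}$ representing $F|_{X_n}$, with $\|f_n\|_{L^{p^*(\cdot)}} \le \|F\|$ by the isometric embedding already proved. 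Consistency $f_{n+1}1_{\{1+1/n\le p\le n\}} = f_n$ lets me patch these into a single $f \in L^0$, and the uniform bound plus the Fatou-type monotonicity of $N(\cdot)$ gives $f \in L^{p^*(\cdot)}$ with $\|f\| \le \|F\|$; density of $\bigcup_n X_n$ then yields $\langle F, x\rangle = \int x f\,dm$ for all $x$, and combined with the H\"older bound the norms match.

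The third part is the genuinely structural claim: $(L_0^{p(\cdot)})^* \cong \widetilde L^{p^*(\cdot)}$ for arbitrary $p\colon[0,1]\to(1,\infty)$. Here the point is that $L_0^{p(\cdot)} = \overline{\bigcup_n X_n}$ with $X_n$ the image of $P_n f = 1_{\{1+1/n\le p\le n\}}f$, and each $X_n$ is a bounded-exponent space whose dual is $X_n^{*} \cong L^{p^*(\cdot)}$-on-the-slab by the first two parts applied on the measurable set $\{1+1/n \le p \le n\}$. A functional on $L_0^{p(\cdot)}$ restricts compatibly to each $X_n$, producing a coherent family of slab functions $f_n$, hence a single $f \in L^0$; the norm of the functional is $\sup_n \|f|_{\text{slab}_n}\|_{L^{p^*(\cdot)}}$, which is exactly $N(f)$, the defining norm of $\widetilde L^{p^*(\cdot)}$. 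Conversely any $f$ with $N(f) < \infty$ defines a bounded functional on $L_0^{p(\cdot)}$ via $x \mapsto \int xf\,dm$ (using that $x \in L_0^{p(\cdot)}$ is approximated by its slab truncations and applying H\"older on each slab), with operator norm $N(f)$.

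\textbf{Main obstacle.} The routine parts are the H\"older bound and the extremal test function; the delicate step is the surjectivity on the bounded-exponent pieces $X_n$ — one must be sure that the ODE-determined norm on a slab really is a Banach function norm in the classical sense (order continuity, the Fatou property, an adequate supply of simple functions) so that the standard duality machinery applies, rather than invoking properties specific to the Nakano norm to which it is only $2$-equivalent. In other words, the isometric — not merely isomorphic — identification of $F|_{X_n}$ with an element of $L^{p^*(\cdot)}$ has to come from the extremal vector $J^{-1}(f_n)$ and the exact pointwise Young equality, and verifying that this extremal vector indeed lies in $X_n$ with the predicted norm (so that no constant leaks in) is where the argument needs the most care. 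Patching the $f_n$'s and matching $\sup_n$ with $N(\cdot)$ is then a formality given the monotone-convergence behavior of the solutions in their initial data established in the excerpt.
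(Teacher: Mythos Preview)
Your H\"older direction and the patching via the projections $P_n$ are fine and match the paper. The genuine gap is in the isometric step: your claim that $\tfrac{d}{dt}\int_0^t |x\,J(x)|\,ds = (\varphi_x\varphi_{J(x)})'(t)$ ``by Young with equality'' is false for non-constant $p$. Unwinding the H\"older computation without the normalization gives
\[
(\varphi_x\varphi_g)'(t) - |x(t)g(t)|
= \varphi_x\varphi_g\left[\frac{1}{p}\Big(\frac{|x|}{\varphi_x}\Big)^{p} + \frac{1}{p^*}\Big(\frac{|g|}{\varphi_g}\Big)^{p^*} - \frac{|x|}{\varphi_x}\cdot\frac{|g|}{\varphi_g}\right],
\]
and the bracket vanishes iff $(|x|/\varphi_x)^{p} = (|g|/\varphi_g)^{p^*}$. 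With $g = J(x)$ one has $|g|^{p^*}=|x|^{p}$, so equality further demands $\varphi_x(t)^{p(t)} = \varphi_{J(x)}(t)^{p^*(t)}$ for a.e.\ $t$, and this identity does \emph{not} follow from the two ODEs when $p$ varies. Concretely, take $p=2$ on $[0,\tfrac12]$, $p=3$ on $(\tfrac12,1]$ and $x\equiv 1$: then $J(x)\equiv 1$, $\int x\,J(x)\,dm = 1$, while
\[
\|x\|_{p(\cdot)}\,\|J(x)\|_{p^*(\cdot)} = \bigl(2^{-3/2}+\tfrac12\bigr)^{1/3}\bigl(2^{-3/4}+\tfrac12\bigr)^{2/3} > 1.
\]
Thus the pointwise map $J$ of \eqref{eq: Jxt} is \emph{not} the duality map of the ODE-determined norm, and your proposed extremal vector does not certify $\|F\|\geq \|f\|_{p^*(\cdot)}$.

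The paper obtains the isometric inequality by a different mechanism: it approximates $\|\cdot\|_{p(\cdot)}$ by the explicit ``standard form'' compound semi-norms
\[
|\cdot|_N = |\cdot|_{(\ldots(L^{p_1}(\mu_1)\oplus_{p_2}L^{p_2}(\mu_2))\oplus_{p_3}\ldots)\oplus_{p_n}L^{p_n}(\mu_n)},
\]
whose duals are identified \emph{recursively} as the conjugate compounds $(\ldots(L^{p_1^*}\oplus_{p_2^*}L^{p_2^*})\ldots)$, and then lets $\tilde p_N \nearrow p$ in measure. The point is that equality in the pairing is known at the level of each finite compound (where the true duality map is a nested composition, not a single pointwise formula), and is transferred to $L^{p(\cdot)}$ by convergence of the associated derivatives $\partial_t|1_{[0,t]}x|_N \to \varphi_x'$; no single extremal $x$ for a given $f$ is ever written down. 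Your slab-and-patch argument for $(L_0^{p(\cdot)})^* \cong \widetilde L^{p^*(\cdot)}$ in the general case is essentially the paper's and is correct once the bounded-exponent isometry is available --- but that isometry needs the approximation route, not the pointwise-$J$ shortcut.
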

\begin{proof}

It follows from an easy adaptation of H\"older's inequality that $\widetilde{L}^{p^* (\cdot)}\subset  (L^{p(\cdot)})^*$ in the sense that 
\[|F(x)|=\left|\int x\ f\ dm \right| \leq \|x\|_{p(\cdot)} \|f\|_{p^* (\cdot)},\quad x\in L^{p(\cdot)}\]
whenever $f \in \widetilde{L}^{p^* (\cdot)}$ is regarded as a function and $F$ is in the subspace with the usual identification \eqref{eq: Fx}. 

Let us begin by verifying the statement in the reflexive case, i.e. $\essinf_t p(t)>1$ and $\esssup_t p(t)<\infty$ 
(see Theorem \ref{thm: char}), so that we are actually studying a space $\X_n$ for a given $n$. 
Let $F\in (L^{p(\cdot)})^*$. By modifying the standard proof (see e.g. \cite[Prop. 2.17]{FA_book}) of the statement in the usual constant exponent case we obtain that there is $f$ such that 
\begin{equation}\label{eq: Fx}
\langle F, x\rangle = \int x(t) f(t)\ dm(t)
\end{equation}
holds for every $x \in L^\infty$.
 
\newcommand{\supp}{\mathrm{supp}}

Note that $\|F\|_{\X^*} \leq \|f\|_{p^* (\cdot)}$. By applying the continuity of $F$ on one hand, and Lebesgue's monotone convergence theorem on the other hand, we may approximate 
\[F(x_n ) \to F(x),\quad \int x_n \ f\ dm\to \int x\ f\ dm\]
by bounded functions $x_n \to x \in L^{p (\cdot)}$. Thus \eqref{eq: Fx} holds for all $x \in L^{p (\cdot)}$.

Next we check that $\|F\|_{\X^*} \geq \|f\|_{p^* (\cdot)}$ which yields that the dual $(L^{p(\cdot)})^*$ is (even isometrically) $L^{p^* (\cdot)}$. First we restrict our considerations to functions $x \in L^{p(\cdot)}$ which are essentially bounded. As in \cite{talponen3} we investigate standard form 
simple semi-norms $N$, 
\[|x|_{N} = |x|_{(\ldots (L^{p_1} (\mu_1 ) \oplus_{p_2} L^{p_2} (\mu_2 )) \oplus_{p_3} \ldots )
\oplus_{p_n} L^{p_n} (\mu_n )},\]
which approximate $p(\cdot)$ in the sense that
\[\tilde{p}_N \nearrow p(\cdot) \]
in measure. In this case we may assume that $1 < \essinf p(\cdot) \leq p_i$.
Note that these semi-norms correspond in a canonical way to Banach spaces 
\[(\ldots (L^{p_1} (\mu_1 ) \oplus_{p_2} L^{p_2} (\mu_2 )) \oplus_{p_3} \ldots )
\oplus_{p_n} L^{p_n} (\mu_n ) .\]
The duality of these spaces is understood, namely, it is easy to verify recursively that  
\begin{multline*}
((\ldots (L^{p_1} (\mu_1 ) \oplus_{p_2} L^{p_2} (\mu_2 )) \oplus_{p_3} \ldots )
\oplus_{p_n} L^{p_n} (\mu_n ))^* \\
= (\ldots (L^{p_{1}^*} (\mu_1 ) \oplus_{p_{2}^*} L^{p_{2}^*} (\mu_2 )) \oplus_{p_{3}^*} \ldots ) \oplus_{p_{n}^*} L^{p_{n}^*} (\mu_n ).
\end{multline*}
If $N$ denotes the semi-norm corresponding to the left hand space inside the parenthesis, then the semi-norm 
corresponding to the right hand space is denoted by $N^*$.  
Since the supports of $\mu_i$ are successive, we may consider these spaces as function spaces on the unit interval. Denote by $\supp(N) = \bigcup_i \supp (\mu_i )$ for the corresponding measures $\mu_i$ in the representation of the semi-norm in question. Note that 
\[\int_{\supp(N)} x J_{p_N} (1_{\supp(N)}x) dm= |x|_N \ |J_{p_N} (1_{\supp(N)}x) |_{N^*},\]
from the duality of the spaces. Indeed, here we apply the duality of $L^{p_i} (\mu_i )$ spaces and 
of direct products $\R \oplus_{p_i} \R$, together with the fact
\[J \left(\left(\int x^{p_i}\ d\mu_i \right)^{\frac{1}{p_i}}\right) = \left(\int (J x)^{p_{i}^*} \ d\mu_i \right)^{\frac{1}{p_{i}^*}}.\]

We have the following convergences in measure
\[J_{\tilde{p}_N  \to \tilde{p}_{N^*}} (1_{\supp(N)}x) \to J_{p} (x),\]
\[\frac{\partial}{\partial t} |1_{[0,t]\cap \supp(N)} x|_N \to \varphi_{x}' ,\quad \frac{\partial}{\partial t} |J_{p_N} (1_{[0,t]\cap \supp(N^)}x)|_{N^*} \to \varphi_{x^*}' \]
as $\tilde{p}_N \nearrow p(\cdot)$ in measure. It follows that
\begin{equation}\label{eq: Jx} 
\int x J_p (x) \ dm= \|x\|_{p(\cdot)} \ \|J_p (x)\|_{p^* (\cdot)}.
\end{equation}
In fact, by using the absolute continuity of the norm accumulation functions $\varphi_f$, we obtain by 
straightforward approximation argument that \eqref{eq: Jx} holds for all $x \in  L^{p(\cdot)}$ in the reflexive case. Thus $\|F\|_{\X^*} = \|f\|_{p^* (\cdot)}$.

Next we treat the non-reflexive case. As pointed above, it follows from H\"older's inequality that 
$\widetilde{L}^{p^* (\cdot)} \subset (L_{0}^{p(\cdot)})^*$ and $\|F\|_{(L_{0}^{p(\cdot)})^*}\leq \|f\|_{\widetilde{L}^{p^* (\cdot)}}$.
Pick $f\in \S_{\widetilde{L}^{p^* (\cdot)}}$. Denote  
\[X_n = \{1_{1+ 1/n \leq p(\cdot)\leq n}\ x\colon x\in L_{0}^{p(\cdot)}\},\quad n\in\N\]
and let $P_n$ be the corresponding band projections. 
Restrict $F\in (L_{0}^{p(\cdot)})^*$ corresponding to $f$ to the subspace $\bigcup_n X_n$.
This does not change the operator norm, since the subspace is dense.
It is easy to see that  $\|P_n f\|_{L^{p^* (\cdot)}} \to  \|f\|_{\widetilde{L}^{p^* (\cdot)}}$ as $n\to \infty$. 
Hence, by using the observations of the reflexive case, we may pick for each $\epsilon>0$ such $n$ and $x\in X_n$, $\|x\|_{L^{p(\cdot)}}=1$, that $|(P^{*}_n f)(x)|>1-\epsilon$. 
Thus we observe that $\widetilde{L}^{p^* (\cdot)} \subset (L_{0}^{p(\cdot)})^*$ is an isometric subspace. 

Finally, pick $F \in (L_{0}^{p(\cdot)})^*$. Restrict $F$ to $\bigcup_n X_n$. Since the projections $P_n$ commute, this produces a natural candidate for the representation, namely
$f=\lim_n P^{*}_n  F$, the limit taken point-wise a.e. Since each $P^*_n F \in L^{p^* (\cdot)}$ and $\|P^*_n F\|_{L^{p^* (\cdot)}} \leq \|F\|_{(L_{0}^{p(\cdot)})^*}$, we obtain that
$f \in \widetilde{L}^{p^* (\cdot)}$, although the above limit does \emph{not}, a priori, exist in the $\widetilde{L}^{p^* (\cdot)}$ norm.  In fact,
$\|f\|_{\widetilde{L}^{p^* (\cdot)}}= \|F\|_{(L_{0}^{p(\cdot)})^*}$ by the construction of the norms.
Let us verify that $f$ presents $F$. Pick $x\in L_{0}^{p(\cdot)}$.
Then 
\begin{multline*}
F(x)-\int x(t)\ f(t)\ dm(t)\\ 
= F(x- P_n x) + F(P_n x) - \left(\int (x-P_n x)\ f\ dm + \int P_n x\ f\ dm \right).
\end{multline*}
Here $F(x- P_n x)\to 0$ by the continuity of the functional and $\int (x-P_n x)\ f\ dm\to 0$ by H\"older's inequality. 
On the other hand, 
\[F(P_n x)= (P^{*}_n f) (x)=\int_{1+ 1/n \leq p(\cdot )\leq n} f(t)\ x(t)\ dm(t) =  \int P_n x\ f\ dm .\]
Thus $F(x)=\int x(t)\ f(t)\ dm(t)$ for all $x\in  L_{0}^{p(\cdot)}$. This concludes the proof.
\end{proof}

Given a function $g\colon [0,1]\to \R$ with finite variation, let us denote a special `variation norm' as follows:
\[\bigvee_{p(\cdot)^*} m_g = \bigvee_{p(\cdot)^*} g = \sup \left\{\int_{0}^1 f\ dm_g\colon f\in C[0,1],\ \|f\|_{p(\cdot)}\leq 1\right\}.\]
Here $m_g$ is the Lebesgue-Stieltjes measure induced by $g$.
For a continuously differentiable $g$ the notable special cases are 
\[\bigvee_{(p\equiv 1)^*} g = \mathrm{Lip}(g),\]
the best Lipschitz constant of $g$, and the usual total variation
\[\bigvee_{(p\equiv \infty)^*} g = \bigvee g.\]
The above notion is applied somewhat tautologically in the following result.
To allow for integrating non-continuous functions easily, we will integrate in the more general Lebesgue-Stieltjes sense in taking duality. Thus, let $m_g$ be the Lebesgue-Stieltjes measure induced by $g$.

\begin{theorem}
Let $p\colon [0,1]\to (1,\infty)$ be measurable such that $L^{p(\cdot)}$ is a Banach space and let
\[\X :=\overline{C[0,1]}\subset L^{p(\cdot)} .\]
Then the dual space $\X^*$ elements are Lebesgue-Stieltjes measures $m_g$ with finite $\bigvee_{p(\cdot)^*} m_g$ variation.
The dual space is endowed with the norm 
\[\|m_g \|_{\X^*} =  \bigvee_{p(\cdot)^*} m_g\] 
and the duality is given by
\[\langle F, x\rangle =\int_{0}^1 x(t)\ dm_g (t),\quad x\in \X,\]
the Lebesgue integral with Lebesgue-Stieltjes measure $m_g$, induced by $g(t)=F(1_{[0,t)})$ for $F\in \X^*$. 
\end{theorem}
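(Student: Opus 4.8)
The plan is to identify $\X^*$ with a space of Lebesgue--Stieltjes measures by combining the Riesz representation theorem for $C[0,1]^*$ with the observation that the norm $\bigvee_{p(\cdot)^*}$ is, by its very definition, the restriction of the $\X^*$-norm to such measures. I would proceed in four steps. First, since $\X = \overline{C[0,1]}$ is a closed subspace of the Banach space $L^{p(\cdot)}$, every $F\in\X^*$ is by Hahn--Banach the restriction of a bounded functional on $L^{p(\cdot)}$; but more directly, $F$ restricted to $C[0,1]$ is a bounded linear functional on $C[0,1]$ with respect to the $L^{p(\cdot)}$-norm, hence a fortiori (since $\|\cdot\|_{p(\cdot)}$ is dominated by a constant times $\|\cdot\|_\infty$ on $C[0,1]$ — because $\esssup p<\infty$ is not assumed, one instead notes $\|1\|_{p(\cdot)}<\infty$ and uses $\|h\|_{p(\cdot)}\le \|h\|_\infty\|1\|_{p(\cdot)}$ by monotonicity of the ODE in $|f|$) a bounded functional in the sup norm. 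By the classical Riesz representation theorem there is a unique regular signed Borel measure $\mu$ of finite total variation with $F(h)=\int_0^1 h\,d\mu$ for all $h\in C[0,1]$. One then checks that $\mu$ is a Lebesgue--Stieltjes measure $m_g$ by taking $g(t)=F(1_{[0,t)})=\mu([0,t))$; here a short argument is needed that $1_{[0,t)}\in\X$, which follows because $1_{[0,t)}$ can be approximated in $L^{p(\cdot)}$ by continuous functions (ramp functions), using the absolute continuity of the norm-accumulation functions $\varphi_f$ exactly as in the density argument given earlier in the paper.

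Second, I would compute the operator norm. For $F\in\X^*$ represented by $m_g$ and $x\in\X$, density of $C[0,1]$ in $\X$ gives
\[
\|F\|_{\X^*}=\sup\left\{|F(h)|\colon h\in C[0,1],\ \|h\|_{p(\cdot)}\le 1\right\}
=\sup\left\{\left|\int_0^1 h\,dm_g\right|\colon h\in C[0,1],\ \|h\|_{p(\cdot)}\le 1\right\},
\]
which is precisely $\bigvee_{p(\cdot)^*} m_g$ after noting that replacing $h$ by $-h$ lets us drop the absolute value. Third, the duality formula $\langle F,x\rangle=\int_0^1 x\,dm_g$ for general $x\in\X$ (not just continuous $x$) follows by approximating $x$ by continuous functions $h_n\to x$ in $L^{p(\cdot)}$: the left side converges by continuity of $F$, and the right side converges because $\left|\int (x-h_n)\,dm_g\right|\le \|x-h_n\|_{p(\cdot)}\cdot\bigvee_{p(\cdot)^*}m_g$, which is the defining inequality of the variation norm extended from continuous to $L^{p(\cdot)}$ integrands by the same density. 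Fourth, conversely, any Lebesgue--Stieltjes measure $m_g$ with $\bigvee_{p(\cdot)^*}m_g<\infty$ defines, via $x\mapsto\int_0^1 x\,dm_g$, a bounded functional on $C[0,1]$ in the $\|\cdot\|_{p(\cdot)}$-norm, hence extends uniquely to $\X$; so the correspondence is onto, and the two preceding steps show it is isometric. Uniqueness of $g$ (up to the usual normalization of functions of bounded variation) comes from the injectivity of $g\mapsto m_g$ together with the recovery formula $g(t)=F(1_{[0,t)})$.

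The main obstacle I anticipate is not the abstract representation — that is essentially Riesz plus a density argument — but rather the bookkeeping around the variation norm $\bigvee_{p(\cdot)^*}$ itself: one must be careful that it is genuinely a norm on the relevant space of measures (positive-definiteness requires that $\bigvee_{p(\cdot)^*}m_g=0$ forces $m_g=0$, which uses density of $C[0,1]$ in $\X$ and that $\|\cdot\|_{p(\cdot)}$ separates points of $\X$), and that the extension of the test inequality $\left|\int f\,dm_g\right|\le \|f\|_{p(\cdot)}\bigvee_{p(\cdot)^*}m_g$ from $f\in C[0,1]$ to all $f\in\X$ is legitimate. A secondary subtlety is that $L^{p(\cdot)}$ need not have essentially bounded exponent here, so one cannot invoke the reflexive-case machinery; instead all estimates must be pushed through the ODE-monotonicity bound $\|h\|_{p(\cdot)}\le\|h\|_\infty\|1\|_{p(\cdot)}$ and the absolute continuity of $\varphi_f$, both of which are available from the earlier sections. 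Once those technical points are in place, the identification $\X^*\cong\{m_g:\bigvee_{p(\cdot)^*}m_g<\infty\}$ with the stated norm and duality pairing follows formally.
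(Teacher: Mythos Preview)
Your outline matches the paper's approach almost exactly: Riesz representation on $(C[0,1],\|\cdot\|_\infty)$ via a norm comparison, the tautological identification $\|F\|_{\X^*}=\bigvee_{p(\cdot)^*}m_g$, and extension by density. The one genuine gap is in your third step. To pass from $\int h_n\,dm_g$ to $\int x\,dm_g$ you invoke
\[
\left|\int (x-h_n)\,dm_g\right|\le \|x-h_n\|_{p(\cdot)}\,\bigvee_{p(\cdot)^*}m_g,
\]
but the variation $\bigvee_{p(\cdot)^*}m_g$ is \emph{defined} only against continuous test functions, and $x-h_n$ is not continuous. Worse, $m_g$ need not be absolutely continuous with respect to Lebesgue measure, so for $x\in\X\subset L^{p(\cdot)}$ (an $m$-equivalence class) the symbol $\int x\,dm_g$ is not a priori meaningful, and the displayed inequality is not available until after you have done the work you are using it for. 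You flag this as a ``technical point'' at the end, but it is in fact the substantive part of the extension.

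The paper handles exactly this point differently: given $x_n\to x$ in $L^{p(\cdot)}$, it extracts a subsequence with $\sum_j\|x_{n_{j+1}}-x_{n_j}\|_{p(\cdot)}<\infty$, uses the variation bound only on the \emph{continuous} increments $x_{n_{j+1}}-x_{n_j}$ to get $\sum_j|\int(x_{n_{j+1}}-x_{n_j})\,dm_g|<\infty$, and then argues that $(x_{n_j})$ is Cauchy in $L^1(m_g)$ and converges $m_g$-a.e.\ (after modification on an $m_g$-null set) to $x$. Only then is $\int x\,dm_g$ shown to exist as a genuine Lebesgue--Stieltjes integral and to equal $\lim_j\int x_{n_j}\,dm_g$. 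Your argument becomes correct if you replace the circular estimate by this $L^1(m_g)$ step.
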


\begin{proof}
Let us begin by studying continuous linear functionals $F$ on the normed space $C[0,1]\subset L^{p(\cdot)}$. 
Since $\|\cdot \|_{p(\cdot)} \leq e\|f\|_\infty$ (see \cite{talponen3}), we obtain that each $F \in$ \mbox{$(C[0,1],\|\cdot\|_{p(\cdot)} )^*$} is also bounded
with respect to the norm $\|\cdot \|_{\infty}^*$. Thus $F \in (C[0,1],\|\cdot\|_{p(\cdot)} )^* \subset (C[0,1],\|\cdot\|_{\infty} )^*$ with the usual duality
\[\langle F, f\rangle =  \int f(t)\ dg(t),\quad g(t)=F(1_{[0,t)})\]
and
\[\bigvee g \leq  e \|F\|_{(C[0,1],\|\cdot\|_{L^{p(\cdot)}})^* }    .\]

We note that $F$ is a continuous linear functional on $(C[0,1],\|\cdot\|_{L^{p(\cdot)}})$, the above duality holds, if and only if 
\[\langle F , f\rangle = \int f\ dm_g ,\quad f\in C[0,1],\] 
$g(t)=F(1_{[0,t)})$. Here $\|F\|_{\X^*}=\bigvee_{p(\cdot)^*} m_g$ by the definition of the special variation.
 
Let us verify that the above integral representation extends continuously to the closure $\overline{C[0,1]}\subset L^{p(\cdot)}$ for each $F\in \X^*$. 
Fix $x\in \overline{C[0,1]}\subset L^{p(\cdot)}$. Pick $(x_n ) \subset C[0,1]$ such that $\|x_n - x\|_{L^{p(\cdot)}}\to 0$ as $n\to \infty$. Since $(x_n)$ is Cauchy, we can extract a subsequence $(n_j)$ 
such that $x_{n_1}+\sum_j x_{n_{j+1}}-x_{n_j}=x$ unconditionally in the $L^{p(\cdot)}$-norm and $\sum_j \|x_{n_{j+1}}-x_{n_j}\|_{p(\cdot)} <\infty$. It follows from the definition of $\bigvee_{p(\cdot)^*} m_g$ that then 
\begin{equation}\label{eq: dm_g}
\sum_j \left|\int (x_{n_{j+1}}-x_{n_j})(t)\ dm_g (t)\right| <\infty . 
\end{equation}
By passing to a further subsequence and modifying all the functions $x_{n_j}$ and $x$ in a $m_g$-null set we may assume that $x_{n_1}(t)+\sum_j (x_{n_{j+1}}-x_{n_j})(t)=x(t)$ for every $t$.

Consider the Banach space $L^1 (m_g )$. We obtain from \eqref{eq: dm_g} that $x_{n_j} \to y$ in the norm $\|\cdot\|_{L^1 (m_g )}$. Also, we observe that $y(t)=x(t)$ for $m_g$-a.e. $t$ by convergence in $m_g$-measure 
considerations. We conclude that 
\[\int x_{n_j} (t)\ dm_g (t) \to \int x (t)\ dm_g (t),\quad j\to\infty.\]
It is easy to see that the above convergence does not depend on the particular selection of the approximating Cauchy sequence of continuous functions. 
\end{proof}

\section{$L^{p(\cdot)} (\mu)$ spaces }

Let us consider an equivalent measure $\mu \sim m$ on the unit interval and $\frac{d\mu}{dm}$ with 
\[\mu(A)=\int_A \frac{d\mu}{dm}(t)\ dm(t)\]
for all Borel sets $A$. The above Radon-Nikodym derivative need not be integrable. Going back to the heuristical derivation of the norm-determining
ODE and repeating the considerations with $L^p (\mu)$ in place of $L^p$ under the assumption that $\frac{d\mu}{dm}(t)$ is a continuous function, 
we arrive at the following ODE:
\begin{equation}\label{eq: mudef}
\varphi(0 )=0^+ ,\ \varphi'(t)=\frac{d\mu}{dm}(t)\frac{|f(t)|^{p(t)}}{p(t)}\varphi(t)^{1-p(t)}\quad \mathrm{for}\ m\mathrm{-a.e.}\ t\in [0,1].
\end{equation}
Similarly as above we define a class of functions together with a norm (for a general $\mu \sim m$) and we denote this space by $L^{p(\cdot)}(\mu)$. This can be regarded as a `weighted $L^{p(\cdot)}$ space'. 
Recall that $L^p ([0,1])$ and $L^p (\R)$ are isometric; the same reasoning extends to our setting.

\begin{proposition}\label{prop: m_mu}
Let $p\colon [0,1]\to [1,\infty)$ be measurable such that $L^{p(\cdot)}$ is a Banach space and 
$\mu\sim m$ . Then $L^{p(\cdot)}(\mu)$ is a Banach space as well and the mapping
\[T\colon f(t)\mapsto \left(\frac{d\mu}{dm}(t)\right)^{-\frac{1}{p(t)}} f(t)\]
is a surjective linear isometry $L^{p(\cdot)}\to L^{p(\cdot)}(\mu)$.
\end{proposition}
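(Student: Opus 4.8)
The plan is to verify the claimed isometry by checking that $T$ transports the norm-determining ODE for $L^{p(\cdot)}(\mu)$ into the one for $L^{p(\cdot)}$, so that the norm accumulation functions coincide. Concretely, fix $f \in L^{p(\cdot)}$ and set $g = Tf$, i.e. $g(t) = \left(\frac{d\mu}{dm}(t)\right)^{-1/p(t)} f(t)$. I would first observe that $|g(t)|^{p(t)} = \left(\frac{d\mu}{dm}(t)\right)^{-1} |f(t)|^{p(t)}$, so that $\frac{d\mu}{dm}(t)\,\frac{|g(t)|^{p(t)}}{p(t)} = \frac{|f(t)|^{p(t)}}{p(t)}$ for $m$-a.e.\ $t$. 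Consequently the right-hand side of the $\mu$-ODE \eqref{eq: mudef} for $g$ coincides pointwise a.e.\ with the right-hand side of the ODE \eqref{eq: def} for $f$. Since a Carath\'eodory solution depends only on the measurable vector field appearing in the integral formulation, the stabilized (maximal) solution $\varphi_{g}^{\mu}$ for $g$ with respect to \eqref{eq: mudef} is literally the same absolutely continuous function as $\varphi_{f}$ for $f$ with respect to \eqref{eq: def}. Evaluating at $t=1$ gives $\|Tf\|_{L^{p(\cdot)}(\mu)} = \varphi_{g}^{\mu}(1) = \varphi_f(1) = \|f\|_{L^{p(\cdot)}}$.

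Having the norm identity, the remaining points are routine. Linearity of $T$ is immediate from its pointwise definition. The identity just established shows $T$ maps $L^{p(\cdot)}$ into $L^{p(\cdot)}(\mu)$ isometrically, and in particular injectively. For surjectivity, I would note that $T$ has the obvious two-sided inverse $S\colon h(t) \mapsto \left(\frac{d\mu}{dm}(t)\right)^{1/p(t)} h(t)$, which by the symmetric computation (using $\frac{dm}{d\mu} = \left(\frac{d\mu}{dm}\right)^{-1}$, valid since $\mu \sim m$) is an isometry $L^{p(\cdot)}(\mu)\to L^{p(\cdot)}$; since $S = T^{-1}$ on the level of $L^0$, $T$ is onto. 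Finally, $L^{p(\cdot)}(\mu)$ being the isometric image of the Banach space $L^{p(\cdot)}$ under a bijective linear isometry is itself a Banach space, which gives the first assertion.

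The one place requiring a little care — the main (mild) obstacle — is making rigorous the heuristic passage from the continuous-density derivation of \eqref{eq: mudef} to the assertion that, for a general $\mu \sim m$ with possibly non-integrable, non-continuous Radon--Nikodym derivative, \eqref{eq: mudef} still defines the norm of $L^{p(\cdot)}(\mu)$ and that a stabilized solution exists. Here I would lean on the same approximation scheme used in \cite{talponen3} and earlier in this paper: work first with positive initial values $\varphi(0) = x_0 > 0$, where local boundedness of $p(\cdot)$ (or the band-projection reduction to the essentially bounded exponent case, as in Theorem~\ref{thm: char} and the extended space $\widetilde{L}^{p(\cdot)}$) yields unique local Picard solutions, verify the ODE transport identity at that level, and then let $x_0 \searrow 0$; monotone convergence of the solutions in their initial values passes the isometry to the maximal solutions. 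Since both sides of the transported vector field agree $m$-a.e.\ and $\mu$ and $m$ have the same null sets, every "for a.e.\ $t$" clause is respected under the change, so no genuine analytic difficulty arises — only bookkeeping to confirm that the class $L^{p(\cdot)}(\mu)$ (and its Banach-space status) is inherited rather than assumed.
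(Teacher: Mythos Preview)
Your proposal is correct and follows essentially the same route as the paper: you show that the Radon--Nikodym factor cancels in the vector field of \eqref{eq: mudef}, so the norm-accumulation ODEs for $f$ and $Tf$ coincide (the paper phrases this via the identity $\varphi_{\mu,T(f)}' = \varphi_{m,f}'$ and notes that ``a moment's reflection involving a joint positive initial value'' gives $\varphi_{\mu,T(f)} = \varphi_{m,f}$), then exhibit the pointwise inverse for surjectivity and deduce the Banach-space claim. Your third paragraph spells out the positive-initial-value stabilization step more explicitly than the paper does, but this is exactly the justification the paper is gesturing at.
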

\begin{proof}
Clearly the mapping is linear. Isometry follows by calculation:
\begin{multline*}
\varphi_{\mu,T(f)}'(t)=\frac{d\mu}{dm}(t)\frac{\left|\left(\frac{d\mu}{dm}(t)\right)^{-\frac{1}{p(t)}} f(t)\right|^{p(t)}}{p(t)}\varphi_{\mu,T(f)}(t)^{1-p(t)}\\
=\frac{|f(t)|^{p(t)}}{p(t)}\varphi_{\mu,T(f)}(t)^{1-p(t)}
=\frac{|f(t)|^{p(t)}}{p(t)}\varphi_{m,f}(t)^{1-p(t)}=\varphi_{m,f}'(t).
\end{multline*}
Indeed, a moment's reflection involving a joint positive initial value justifies the fact $\varphi_{\mu,T(f)}=\varphi_{m,f}$. Surjectivity follows by observing that 
\[f (t) \mapsto \left(\frac{d\mu}{dm}(t)\right)^{\frac{1}{p(t)}} f(t)\]
defines the inverse of the operator. Thus the class $L^{p(\cdot)}(\mu)$ is a Banach space as an (isometrically) isomorphic copy of $L^{p(\cdot)}$.
\end{proof}

\subsection{Applications of changing density}
\begin{theorem}\label{thm: char}
Let $p\colon [0,1] \to (1,\infty)$ be measurable such that $L^{p(\cdot)}$ is a Banach space. The following conditions are equivalent:
\begin{enumerate}
\item[(1)]{$L^ {p(\cdot)}$ is uniformly convex and uniformly smooth.}
\item[(2)]{$L^ {p(\cdot)}$ is reflexive.}
\item[(3)]{$L_{0}^ {p(\cdot)}$ contains neither $\ell^1$, nor $c_0$ almost isometrically.}
\item[(4)]{$\essinf_t p(t)>1$ and $\esssup_t p(t)<\infty$.}
\end{enumerate}
\end{theorem}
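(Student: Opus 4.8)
The plan is to prove the cycle of implications $(4)\Rightarrow(1)\Rightarrow(2)\Rightarrow(3)\Rightarrow(4)$, since $(1)\Rightarrow(2)$ is the Milman--Pettis theorem and $(2)\Rightarrow(3)$ is immediate (a reflexive space cannot contain $\ell^1$ or $c_0$, not even isomorphically, let alone almost isometrically). So the real work lies in $(4)\Rightarrow(1)$ and in the contrapositive of $(3)\Rightarrow(4)$.

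For $(4)\Rightarrow(1)$: assume $1<a:=\essinf_t p(t)\le \esssup_t p(t)=:b<\infty$. The strategy is to transfer the well-known uniform convexity and uniform smoothness moduli of the $L^{p}$ spaces, $a\le p\le b$, and of finite $\oplus_{p_i}$-sums thereof, to the ODE-norm by approximation. Concretely, I would use the standard-form simple semi-norms $|\cdot|_N$ built from iterated $\oplus_{p_i}$-sums of $L^{p_i}(\mu_i)$ spaces with $a\le p_i\le b$, exactly as in the duality theorem above, together with the fact established there that $\tilde p_N\nearrow p(\cdot)$ in measure forces $|1_{[0,t]}x|_N$ (and its $t$-derivative, i.e.\ the norm-accumulation function $\varphi$) to converge. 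Each such iterated sum is uniformly convex and uniformly smooth with modulus bounds depending only on $a$ and $b$ (Clarkson's inequalities are stable under $\oplus_p$-sums for $1<p<\infty$, with uniform control when the exponents stay in $[a,b]$). Passing to the limit through the $\varphi$-functions, and using that $\varphi_{x+y}(1),\varphi_{x-y}(1)$ are controlled by the approximating semi-norms, yields that the ODE-norm itself satisfies a uniform convexity inequality and (dually, via the duality theorem identifying $(L^{p(\cdot)})^*$ isometrically with $L^{p^*(\cdot)}$, where $1<b^*\le p^*(\cdot)\le a^*<\infty$) a uniform smoothness inequality. Here the cleanest route to uniform smoothness is to prove uniform convexity for \emph{all} exponents in $[a,b]$, then invoke the isometric duality to get uniform convexity of the dual, which is equivalent to uniform smoothness of $L^{p(\cdot)}$.

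For $\neg(4)\Rightarrow\neg(3)$: suppose $\essinf_t p(t)=1$. Then for each $n$ the set $A_n=\{t: p(t)\le 1+1/n\}$ has positive measure, and one can choose a sequence of disjoint subsets $B_k$ with $p(\cdot)\le 1+1/k$ on $B_k$; restricting the construction to $\bigsqcup_k B_k$ and using that the ODE-norm there is, by the constant-$2$ equivalence and a direct computation with the $\boxplus$-operations (using $\varphi^{1-p(t)}\to 1$ as $p(t)\searrow 1$, as in the paragraph preceding the $J$-map), asymptotically additive on functions supported on distinct $B_k$'s, one builds a sequence of normalized disjointly supported vectors spanning a subspace $(1+\epsilon)$-isomorphic to $\ell^1$ — this subspace lies in $L_0^{p(\cdot)}$ since each $B_k$ has bounded exponent. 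Symmetrically, if $\esssup_t p(t)=\infty$, the sets $\{t:p(t)\ge n\}$ have positive measure; here I would use Proposition~\ref{prop: boxplus} and the fact that $a\boxplus_r b\to \max(a,b)$ as $r\to\infty$ to see that on an appropriate disjoint family the norm behaves like a $c_0$-sum, producing an almost-isometric copy of $c_0$ inside $L_0^{p(\cdot)}$. In both cases disjointness of supports is what makes the norm-accumulation ODE essentially decouple across the pieces, so the estimates reduce to the already-recorded inequalities for $\boxplus_p$.

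The main obstacle I anticipate is the limiting argument in $(4)\Rightarrow(1)$: one must show not merely that the approximating semi-norms are uniformly convex with a common modulus, but that this modulus \emph{survives} the passage $\tilde p_N\nearrow p(\cdot)$ to the ODE-norm. The subtlety is that $\varphi_{x+y}$ and $\varphi_x,\varphi_y$ are solutions of different (non-separable) ODEs, so a pointwise Clarkson-type inequality is not available directly; one has to run the inequality at the level of the derivatives $\varphi'$ and integrate, controlling the interaction term. The disjoint-support reductions in $\neg(4)\Rightarrow\neg(3)$ are, by comparison, routine given Proposition~\ref{prop: boxplus} and the $2$-equivalence with the Nakano norm.
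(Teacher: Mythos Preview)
Your cycle $(4)\Rightarrow(1)\Rightarrow(2)\Rightarrow(3)\Rightarrow(4)$ matches the paper's, and the implications $(1)\Rightarrow(2)\Rightarrow(3)$ are dismissed identically. The substantive differences are in the two hard implications.

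For $(4)\Rightarrow(1)$ the paper takes a genuinely different route that sidesteps precisely the obstacle you identify. Rather than passing Clarkson inequalities through the approximating semi-norms --- which, as you note, forces one to control $\varphi_{x+y}$ and $\varphi_{x-y}$ in terms of $\varphi_x,\varphi_y$ for \emph{non-disjoint} $x,y$ --- the paper proves that $L^{p(\cdot)}$ satisfies an upper $p$-estimate for $p=\essinf p(\cdot)$ and a lower $q$-estimate for $q=\esssup p(\cdot)$, both with constant $1$, and then invokes \cite[1.f.1, 1.f.7]{LT} to conclude uniform convexity and uniform smoothness (with power type). The gain is that upper/lower estimates involve only \emph{disjointly supported} vectors, and for disjoint supports the norm-accumulation ODEs decouple: the estimate on each iterated-sum semi-norm reduces to Proposition~\ref{prop: boxplus} (or to the known $\ell^{p(\cdot)}$ case from \cite{talponen}), and then one approximates. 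Your Clarkson-by-limits plan is not obviously wrong, but the interaction term you flag is the whole difficulty, and the paper simply never confronts it.

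For $\neg(4)\Rightarrow\neg(3)$ your outline is in the same spirit as the paper's, but two points matter. First, invoking the constant-$2$ equivalence with the Nakano norm only yields isomorphic copies; to get \emph{almost isometric} copies the paper instead passes to an equivalent measure $\mu\sim m$ via Proposition~\ref{prop: m_mu} (choosing $\mu$ so that the blocks $A_{n_i}$ have $\mu$-measure $1$, respectively so that $\mu(\{p>r\})^{1/r}\to 1$), which normalizes the pieces and lets one compare directly with $\ell^1$ or with $L^j$ on each block, the error being controlled by a product condition $\prod_i \|I\colon \ell^{p_i}(2)\to\ell^1(2)\|<1+\eps$. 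Second, in the $c_0$ case the paper does not argue via $a\boxplus_r b\to\max(a,b)$ alone; it builds a dyadic decomposition of $\{p(\cdot)>j\}$ into sets $A^{(k)}_{j}$ and uses the measure normalization to force $\|1_{A^{(n)}_j}\|\to 1$ while keeping $\|\sum_n x_n 1_{A^{(n)}_j}\|$ bounded by $(1+\eps)\max_n|x_n|$. Your sketch is compatible with this, but underspecifies the change-of-density step that upgrades ``isomorphic'' to ``almost isometric''.
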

\begin{proof}
The implications $(1)\implies (2)\implies (3)$ are clear. 

The direction $(3) \implies (4)$. Suppose that $\essinf_t p(t)=1$. We will show that then $L_{0}^ {p(\cdot)}$ contains an isomorphic copy of $\ell^1$ for any isomorphism constant $C>1$.

By the compactness of the unit interval we can find a point $t_0$ such that 
\[\essinf_t 1_{(t_0-\epsilon , t_0 +\epsilon)}(t) p(t)=1\quad \mathrm{for\ each}\ \epsilon >0.\]
Indeed, assume that this is not the case and consider a suitable open cover of open intervals 
$(t_0-\epsilon , t_0 +\epsilon)$, so that there is a finite subcover contradicting $\essinf_t p(t)=1$. Therefore we may extract a sequence $(A_n)$ of measurable subsets of the unit interval with positive measure such that the following conditions hold:
\begin{enumerate}
\item{$\sup p|_{A_n}\searrow 1$ as $n\to \infty$.}
\item{Either $\max A_n < \min A_{n+1}$ for all $n$ or  $\max A_n > \min A_{n+1}$ for all $n$.}
\end{enumerate}

Fix a rapidly decreasing sequence of exponents $p_i \searrow 1$ such that 
\begin{equation}\label{eq: prod}
\prod_i \|I\colon \ell^{p_i}(2) \to \ell^{1}(2)\|<1+\epsilon.
\end{equation}
     
We can find a strictly increasing sequence $(n_i)$ such that $p_i \geq p|_{A_{n_i}}$ for each $i\in \N$. 

Let $\mu$ be an equivalent measure on the unit interval such that $\mu(A_{n_i})=1$ for $i\in \N$. In proving the claim it suffices study $L^{p(\cdot)}(\mu)$ in place of  $L^{p(\cdot)}$, since these spaces are isometric.
Put $\tilde{p}(t)=\max(1,\sum_i p_i 1_{A_{n_i}}(t))$. 

Define a mapping $T \colon \ell^1 \to L^{p(\cdot)}(\mu)$ by putting 
\[T((x_i ))=\sum_{i} x_i  1_{A_{n_i}}\]
where the sum is defined point-wise a.e.

We follow the arguments in \cite{talponen} involving sequence space semi-norms arising as follows.
For $(x_n) \in \ell^0$ we put
\[(\ldots (|x_1 | \boxplus_{p_1} |x_2 |) \boxplus_{p_2} |x_3 |) \boxplus_{p_3} \ldots \boxplus_{p_{n-1}}|x_{n}|)
\boxplus_{p_n} |x_{n+1}|,\]
in case $(A_n)$ is increasing, or the analogous left-handed version if $(A_n)$ is decreasing: 
\[|x_1 | \boxplus_{p_1} (|x_2 | \boxplus_{p_2} (|x_3 |\boxplus_{p_3} \ldots \boxplus_{p_{n-2}}(|x_{p_{n-1}}|\boxplus_{p_{n-1}}(|x_n | \boxplus_{p_n} |x_{n+1}|)\ldots ),\]
we observe that one may control inductively the difference of norms when one changes the values of the exponents $p_i$ by using \eqref{eq: prod}.   
That is, 
\[\left\|\sum_i x_i 1_{A_{n_i}}\right\|_{L^{p(\cdot)}(\mu)} \geq \frac{1}{1+\epsilon} \sum_i \|x_i 1_{A_{n_i}}\|_{L^{p(\cdot)}(\mu)}.\]
Thus, $\|T^{-1} \colon T(\ell^1 ) \to \ell^1\| \leq 1+\epsilon$.

Similarly, by passing to subsequences of $(A_n)$ multiple times we obtain that
\begin{multline*}
\sum_i \|x_i 1_{A_{n_i}}\|_{L^1 (\mu)}= \left\|\sum_i x_i 1_{A_{n_i}}\right\|_{L^1 (\mu)}  \leq (1+\epsilon) \left\|\sum_i x_i  1_{A_{n_i}}\right\|_{L^{p(\cdot)}(\mu)}\\
\leq (1+2\epsilon) \left\|\sum_i x_i 1_{A_{n_i}}\right\|_{L^{\tilde{p}(\cdot)}(\mu)} \leq (1+3\epsilon) \|(x_n)\|_{\ell^{p_\cdot }}\\
\leq (1+4\epsilon)\|(x_n)\|_{\ell^1}= (1+4\epsilon) \sum_i  \|x_i 1_{A_{n_i}}\|_{L^{1}(\mu)}.
\end{multline*}
Indeed, analyzing the $L^{p(\cdot)}$-differential equation shows that for a constant function the values of the derivative uniformly approximate $|f(t)|$ as $p(t)\searrow 1$.
Thus $\|T\|\leq 1+\epsilon$. This shows that the space contains $\ell^1$ almost isometrically.

Next, assume that $\esssup_t p(t)=\infty$. We will show that $L_{0}^ {p(\cdot)}$ contains $c_0$ almost isometrically. 
We may again without loss of generality make some assumptions about the equivalent measure, namely, that $\mu([0,1])=1$ and
\[\mu(\{t\in [0,1]\colon p(t)>r\})^{\frac{1}{r}} \to 1,\quad r\to \infty.\]
We will partition each set $\{t\in [0,1]\colon n<p(t)\leq n+1\}$
to measurable subsets of equal $\mu$-measure, call them $A^{(1)}_{n,0}$ and $A^{(1)}_{n,1}$. (Possibly both the subsets have measure $0$.)
Divide $A^{(1)}_{n,1}$ again to two subsets of equal measure, $A^{(2)}_{n,0}$ and $A^{(2)}_{n,1}$.
We proceed recursively in this manner to construct sets $A^{(k)}_{n,\theta}$, $k,n\in \N$, $\theta\in \{0,1\}$. Let $A^{(k)}_{j}=\bigcup_{n\geq j} A^{(k)}_{n,0}$. 
Observe that 
\[\mu(A^{(k)}_{j})=2^{-k}\mu(\{t\in [0,1]\colon p(t)>j\}),\quad k,j\in \N.\]
Note that 
\begin{equation}\label{eq: muA}
\lim_{j\to \infty} \mu(A^{(k)}_{j})^{\frac{1}{j}}=\lim_{j\to \infty}(2^{-k})^{\frac{1}{j}}\mu(\{t\in [0,1]\colon p(t)>j\})^{\frac{1}{j}}=1,\quad k\in\N.
\end{equation}

Assume first that $1_{A^{(n)}_{j}} \in L^{p(\cdot)}(\mu)$, although this is not necessarily the case. Define an operator  $T\colon c_{00} \to L^{p(\cdot)}(\mu)$ by 
\[T((x_n ))= \sum_n x_n 1_{A^{(n)}_{j}}\]
defined point-wise a.e. Clearly $\|T\|\leq \|1\|_{\widetilde{L}^{p(\cdot)}(\mu)}$. In fact, by choosing a large enough $j$ we get that $\|T\|\leq 1+ \epsilon$.
Indeed, observe that if $\varphi(t) \geq 1$ then $\frac{1}{j}\varphi^{1-j} (t)$ becomes small for a large $j$. Thus
\[(1+\eps) \max_n |x_n | \geq \|T((x_n))\|_{L^{p(\cdot)}(\mu)}\geq \max_n \|T(x_n e_n)\|_{L^{p(\cdot)}(\mu)}.\] 
Here $(e_n)$ is the canonical vector basis of $c_{00}$ and $(T(e_n))_n \subset  L^{p(\cdot)}(\mu)$ is a $1$-unconditional sequence. To show the claim it is required to check that 
\[\|T(e_n)\|_{L^{p(\cdot)}(\mu)}\geq 1-\eps,\quad n\in\N.\]
This is seen as follows, first observe that 
\[ \|1_{A^{(n)}_0 }\|_{L^{p(\cdot)}(\mu)}\geq \|1_{A^{(n)}_j}\|_{L^{p(\cdot)}(\mu)} .\] 
Then observe that for each $\eps>0$ there is $j \in \N$ such that 
\[\frac{1}{p(\cdot)}(\varphi (t) )^{1-p(\cdot)}\geq \frac{1}{j}(\varphi (t) +\eps)^{1-j},\quad p(\cdot)\geq j,\ \varphi(t)+\eps \leq 1.\]
This reads 
\begin{equation}\label{eq: pj}
\|1_{A_{j}^n}\|_{L^{p(\cdot)}(\mu)} \geq \|1_{A_{j}^n}\|_{L^j (\mu)} - \eps
\end{equation}
and further
\begin{equation}\label{eq: limsup}
\|1_{A_{0}^n}\|_{L^{p(\cdot)}(\mu)} \geq \limsup_{j\to\infty} \|1_{A_{j}^n}\|_{L^j (\mu)}.
\end{equation}

Recall that
\begin{equation}\label{eq:  to1}
\|1_{A^{(n)}_j}\|_{L^{j}(\mu)}=(2^{-n})^{\frac{1}{j}}\mu(\{t\in [0,1]\colon p(t)>j\})^{\frac{1}{j}}\to 1,\quad j\to \infty.
\end{equation}

We made an additional assumption during the course of the proof that $1_{A^{(n)}_{j}}$ is included in the space. This assumption can be removed by observing that we may restrict the support of these functions 
to suitable sets $\{t\colon p(t) \leq p^{(n)}\}$, so that the positive-initial-value solutions become Lipschitz with a large constant and 
such that simultaneously \eqref{eq: limsup} and \eqref{eq:  to1} hold up to an extra $\eps$.
Thus $L_{0}^ {p(\cdot)}$ contains $c_0$ almost isometrically.

The direction $(4)\implies (1)$. Here we will follow the analogous argument in the setting of $\ell^{p(\cdot)}$ spaces.
We will require the notions of upper $p$-estimate and lower $q$-estimate of Banach lattices.
If $\X$ is a Banach lattice and $1\leq p\leq q<\infty$ then the upper $p$-estimate and the lower $q$-estimate, respectively, 
are defined as follows:\\
\begin{equation*}
\begin{array}{l}
\left|\left|\sum_{1\leq i\leq n} x_{i}\right|\right|\leq \bigboxplus_{1\leq i\leq n}^p \|x_{i}\| , \phantom{\bigg |}\\
\left|\left|\sum_{1\leq i\leq n} x_{i}\right|\right|\geq \bigboxplus_{1\leq i\leq n}^q \|x_{i}\| ,\phantom{\bigg |}
\end{array}
\end{equation*}
respectively, for any vectors $x_{1},\ldots, x_{n}\in \X$ with pairwise disjoint supports. These estimates involve multiplicative coefficients which are 
taken to be $1$ in this treatment. We will apply the fact that a Banach lattice, which satisfies an upper $p$-estimate and a lower $q$-estimate 
for some $1<p<q<\infty$ with constants $1$ is both uniformly convex and uniformly smooth (with the respective power types), see \cite[1.f.1, 1.f.7]{LT}. 

Let $1<p=\essinf_t p(t)$ and $\esssup_t p(t)=q<\infty$. We claim that $L^{p(\cdot)}$ satisfies the respective estimates for these $p$ and $q$.
To check the upper $p$-estimate, let $f_k$, $1\leq k \leq n$, be disjointly supported functions in $L^{p(\cdot)}$. 
Observe that if $\X$ and $\Y$ satisfy the upper $p$-estimate, then $\X \oplus_r \Y$ satisfies it as well for $r\geq p$. Indeed, 
\[\bigboxplus_{i}^p \|(x_i , y_i)\|_{\X \oplus_{r} \Y}\geq \bigboxplus_{i}^p \|x_i\|_\X \ \boxplus_r \   \bigboxplus_{i}^p\|y_i \|_{\Y}
\geq \left\|\sum_i x_i\right\|_\X  \boxplus_r \left\|\sum_i y_i\right\|_\Y = \left\|\sum_i (x_i, y_i )\right\|_{\X \oplus_r \Y}\]
where we applied the direct sum norm twice, Proposition \ref{prop: boxplus} and the upper $p$-estimate of $\X$ and $\Y$.  
Thus, using this observation inductively on the semi-norms $\mathcal{N}$ we obtain the statement by approximation.

Alternative route. By a simple argument using the definition of outer measure we see that each simple semi-norm can be approximated point-wise from below with other semi-norms 
of the type $\|\cdot\|_{(\ldots (L^{p_1}(\mu_1 ) \oplus_{r_2} L^{p_2}(\mu_2 )) \oplus_{r_3}\ldots \oplus_{r_m} L^{p_{m}}(\mu_m ))}$, $\essinf_t p(t)\leq r_i \leq \esssup_t p(t)$, such that only one of the functions $f_k$ is supported on 
the support of a given $\mu_i $. We may interpret the values of the semi-norms as norms of finite $\ell^{p(\cdot)}$ sequences 
\[f\mapsto (|f|_{L^{p_1}(\mu_1 )},|f|_{L^{p_2}(\mu_2 )},\ldots , |f|_{L^{p_{m}}(\mu_m )})\]
and then the supports of the sequences are disjoint for disjointly supported functions $f_k$. We apply the fact proved in \cite{talponen} which states that for disjointly supported $\ell^{p(\cdot)}$ sequences 
we have the upper $p$-estimate for $p=\inf_t p_t$.  From these considerations it follows that  also disjointly supported $L^{p(\cdot)}$ functions satisfy the upper $p$-estimate for $p=\essinf_t p(t)$.
 
The argument for lower $q$-estimates is analogous. This concludes the proof.
\end{proof}

Next, our aim is to build a kind of \emph{universal} $L^{p(\cdot)}$ space. We will study a modification of Topologist's Sine Curve as follows:
\[p_0 (t)=\frac{1}{1-t}\sin\left(\frac{1}{1-t}\right)+\frac{1}{1-t}+1,\quad 0\leq t <1 .\]

\begin{theorem}
Let $p_0$ be as above. Suppose that $p$ is any exponent such that $L^{p(\cdot)}$ is a Banach space. Then 
$L^{p(\cdot)}$ is finitely representable in $L^{p_0 (\cdot)}$. Assume further that $p\colon [0,1)\to [1,\infty)$ is a $C^1$-function, not constant on any proper interval and that $p'$ changes its sign finitely many times on each interval $[0,a]\subset [0,1)$. Then there is an isometric linear embedding $L^{p(\cdot)}\to L^{p_0 (\cdot)}$ onto a projection band.
\end{theorem}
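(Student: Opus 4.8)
The plan is to treat the two assertions separately, since they use the curve $p_0$ in quite different ways. For the \emph{finite representability} claim, the idea is that $p_0$, being essentially the Topologist's Sine Curve shifted upward by $\frac{1}{1-t}+1$, sweeps through arbitrarily long "monotone excursions" of exponent values near $t=1$. More precisely, on suitable subintervals $[a_k, b_k]$ accumulating at $1$, the function $p_0$ is monotone and its range covers an interval $[1+\delta_k, M_k]$ with $\delta_k \searrow 1$ suppressed and $M_k \nearrow \infty$; by passing to a further subinterval I can make $p_0$ restricted there equal, up to reparametrization, to essentially any prescribed monotone profile on a bounded range. Given a finite-dimensional subspace $E \subset L^{p(\cdot)}$ and $\eps>0$, first approximate the ambient space by a standard-form simple semi-norm of the type $(\ldots(L^{p_1}(\mu_1)\oplus_{p_2}\cdots)\oplus_{p_n}L^{p_n}(\mu_n))$ as in the proof of Theorem~\ref{thm: char} (this captures $E$ up to $1+\eps$), with all $p_i$ in the relevant bounded range; then realize this finite iterated $\oplus_{p_i}$-space isometrically inside $L^{p_0(\cdot)}$ by placing the successive masses $\mu_i$ on successive subintervals of one monotone excursion $[a_k,b_k]$ of $p_0$ on which $p_0$ runs through exactly the exponents $p_1 < p_2 < \cdots < p_n$ (or the decreasing order, using the left-handed $\boxplus$ associativity noted after Proposition~\ref{prop: boxplus}), using the change-of-density isometry Proposition~\ref{prop: m_mu} to install whatever $\mu_i$ one needs. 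The point is that the ODE-determined norm restricted to functions supported on $[a_k,b_k]$, with density chosen appropriately, is \emph{exactly} the iterated-$\boxplus$ norm, because on each piece the exponent is constant.

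For the \emph{isometric embedding onto a band} under the extra $C^1$ hypotheses, the plan is to build an increasing sequence $0 = s_0 < s_1 < s_2 < \cdots \to 1$ and a measure-preserving reparametrization matching $p$ on $[0,1)$ piece-by-piece to $p_0$ on a sequence of disjoint excursion intervals of $p_0$ exhausting a co-null subset near $1$. Concretely: since $p'$ changes sign finitely often on each $[0,a]$, I can partition $[0,1)$ into countably many intervals $[s_{j}, s_{j+1})$ on each of which $p$ is monotone; on the $j$-th such interval $p$ runs monotonically through a bounded range $[\alpha_j,\beta_j]$. On the other side, choose disjoint intervals $[a_j,b_j] \subset [0,1)$ on which $p_0$ is monotone in the matching direction and whose range \emph{contains} $[\alpha_j,\beta_j]$; restrict to the sub-subinterval where $p_0$'s range equals $[\alpha_j,\beta_j]$, and let $\sigma_j \colon [s_j,s_{j+1}) \to [a_j,b_j]$ be the (unique, increasing) homeomorphism with $p_0(\sigma_j(t)) = p(t)$. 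Gluing the $\sigma_j$ gives a Borel bijection $\sigma$ from $[0,1)$ onto $\bigcup_j [a_j,b_j]$; push the Lebesgue measure forward to get an equivalent measure $\mu = \sigma_*(m)$ supported on $\bigcup_j[a_j,b_j]$, and let $d\mu/dm$ be the resulting density (positive and $C^1$ on each $[a_j,b_j]$ because $\sigma_j$ is a $C^1$-diffeomorphism there, as $p,p_0$ are $C^1$ with nonvanishing derivative after removing critical points). Now the operator $U f = (f \circ \sigma^{-1}) \cdot (\,\cdot\,)$ composed with the density-correction isometry of Proposition~\ref{prop: m_mu} sends $L^{p(\cdot)}(m)$ isometrically onto $L^{p_0(\cdot)}(\mu)$, and since $\mu$ is concentrated on $\bigcup_j[a_j,b_j]$ the image is exactly the band $\{g \in L^{p_0(\cdot)} : g = 0 \text{ a.e. off } \bigcup_j[a_j,b_j]\}$ — a projection band via the obvious multiplication-by-indicator projection, which is contractive as a band projection. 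The key consistency check is that the ODE \eqref{eq: mudef} with exponent $p_0$ and density $d\mu/dm$ on $[a_j,b_j]$ pulls back under $\sigma_j$ precisely to \eqref{eq: def} with exponent $p$ on $[s_j,s_{j+1})$; this is a change-of-variables computation in the Picard integral formulation, using $p_0(\sigma_j(t)) = p(t)$ and the chain rule — exactly the kind of computation carried out in Proposition~\ref{prop: m_mu}, now combined with a reparametrization.

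\textbf{Main obstacle.} The delicate point is the behavior near the accumulation point $t=1$: I must ensure the reparametrization $\sigma$ covers a set whose complement is $m$-null (so that no mass is lost and the embedding is genuinely isometric, not merely into a proper sub-band of a sub-band), and that the resulting density $d\mu/dm$, while not integrable, still yields a well-defined absolutely continuous solution to \eqref{eq: mudef} — i.e. that $L^{p_0(\cdot)}(\mu)$ is itself a Banach space with the same functions as $L^{p(\cdot)}$. This requires that the "gaps" $[b_j,a_{j+1}]$ between consecutive excursion intervals can be made to carry the leftover exponent values of $p_0$ in a controlled way, and that summing the norm contributions across infinitely many pieces converges correctly; here one leans on the well-ordered gluing of solutions exactly as in the proof of the theorem that $L^{p(\cdot)} = \widetilde{L}^{p(\cdot)}$ under a well-ordered family of intervals, since the hypothesis on sign changes of $p'$ guarantees the pieces form such a well-ordered family. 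The finite-dimensional part is comparatively soft — it only needs one monotone excursion at a time and no control near $1$ — so the real work is packaging the countably-many-pieces bookkeeping for the second assertion.
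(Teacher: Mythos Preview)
Your approach is essentially the same as the paper's: partition $[0,1)$ into intervals of monotonicity for $p$, match each to a monotone excursion of $p_0$, glue the resulting reparametrizations, and transfer the norm via Proposition~\ref{prop: m_mu}. The paper also omits the finite-representability argument, and your sketch for it is reasonable.

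One technical difference worth noting: you push forward Lebesgue measure under $\sigma$ to obtain $\mu$, which is then supported only on $\bigcup_j [a_j,b_j]$ and hence not equivalent to $m$ on $[0,1]$, so Proposition~\ref{prop: m_mu} does not apply verbatim. The paper sidesteps this by choosing the densities $d\nu/dm = |p'|$ and $d\mu/dm = |p_0'|$ on the \emph{whole} interval; since $p_0 \circ T = p$ forces $|p'| = |p_0' \circ T|\cdot |T'|$ on each piece, the map $T$ is automatically $\nu$--$\mu$ measure-preserving, and both $\nu \sim m$ and $\mu \sim m$ hold (as $p_0'$ vanishes only on a null set), so Proposition~\ref{prop: m_mu} applies directly on each side. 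Your route can be repaired by extending $\mu$ to an equivalent measure on the gaps, but the paper's choice of variation-type densities is cleaner.
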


\begin{proof}
We omit the argument for the first part of the statement.
Towards the second part, according to the assumptions we find a sequence of open subintervals $\Delta_{n} \subset [0,1]$, $n\in\N$, with $\sup \Delta_n = \inf \Delta_{n+1}$ such that the sign of $p'$ does not properly change on the intervals $\Delta_{n}$.
Moreover, we may assume that $|p' (x)|>0$ for $x \in \bigcup_n \Delta_n$. We may choose this collection to be almost a cover in the sense that $m\left([0,1] \setminus \bigcup_n \Delta_{n}\right)=0$.

Now, $p$ is monotone on each $\Delta_{n}$. By the construction of $p_0$ we can find a sequence of open intervals $\Delta_{n}' \subset [0,1]$, $n\in\N$, with $\sup \Delta_{n}' \leq \inf \Delta_{n+1}'$ such that 
there is a $C^1$-diffeomorphism $T_n \colon \Delta_{n} \to \Delta_{n}'$ with $p|_{\Delta_{n} }= p_0 \circ T_n$.  

By taking the union of the graphs of $T_{n}$, i.e. by 'gluing together' these mappings, we define a mapping $T$ defined a.e. on $[0,1]$, which has the property that $p(x)=p_ 0 (T(x))$ for a.e. $x\in [0,1]$. 

Let us define absolutely continuous measures $\nu$ and $\mu$ on $[0,1]$ given by $\frac{d\nu}{dm}(t)=|p'(t)|$ 
and $\frac{d \mu}{d m}(t)= |p_{0}' (t)|$ for $m$-a.e. $t$.

By making suitable identifications via $T$ we may consider $L^{p(\cdot)}(\nu)$ as a subspace of $L^{p_0 (\cdot)}(\mu)$.
Both $\nu$ and $\mu$ can be thought as variation measures corresponding to $p$ and $p_0$, respectively.
Thus it is easy to see that $T$ is a $\nu$-$\mu$-measure-preserving mapping and
\[\|1_{T([0,1])}f\|_{L^{p_0 (\cdot)}(\mu)}=\|f \circ T\|_{L^{p(\cdot)}(\nu)}=\|g\|_{L^{p(\cdot)}(\nu)}\] 
for $f\in L^{p_0 (\cdot)}(\mu)$ such that $f \circ T = g\in L^{p(\cdot)}(\nu)$. Indeed, by using the absolute continuity of the solutions we observe that values of $f$ outside $T([0,1])$ do not influence the norm.

This way we may apply Proposition \ref{prop: m_mu} to observe that $G\colon L^{p(\cdot)}(\nu) \to L^{p_0 (\cdot)}(\mu)$ given by 
\[G(f)[t]=\left(\frac{d\nu}{dm}(T^{-1} (t))\bigg/ \frac{d\mu}{dm}(t)\right)^{\frac{1}{p_0 (t)}}f(T^{-1} (t)) \quad \mathrm{if}\ t\in T([0,1]),\] 
and $G(f)[t]=0$ otherwise, defines the required isometry. Note that in integrating with a change of variable 
above the map $T^{-1}$ is $\mu$-$\nu$-measure-preserving.
\end{proof}

\begin{comment}
The following result provides some information on an invariant involving the exponents.
\begin{theorem}\label{thm: iota}
Consider Banach spaces $L^{p(\cdot)}(\mu)$ and $L^{p^* (\cdot)}(\mu)$ where $\mu \sim m$. Let $f\in L^{p(\cdot)}(\mu)$ and $Jf \in L^{p^* (\cdot)}(\mu)$ be such that $f\neq 0$ a.e.
Then 
\[\iota_f (t) :=\frac{\varphi_{Jf}'}{\varphi_{f}'} \frac{\varphi_f}{\varphi_{Jf}}= \frac{d \log \varphi_{Jf}}{d \log \varphi_{f}} (t) \]
satisfies
\begin{equation}\label{eq: invariant}
\iota_f =\frac{p(\cdot)}{p^* (\cdot)} \ \text{a.e.}
\end{equation}
and, consequently, does not depend on the particular choice of $f$. 
Moreover, 
\[\frac{d\mu}{dm}= (\iota + 1)\varphi_{f}'|f|^{-\iota -1} \varphi_{f}^{\iota}.\] 
\end{theorem}  
  
\begin{proof}
The first part of the statement follows immediately from the equality \eqref{eq: phis} for $L^{p(\cdot)}(\mu)$ type spaces. Indeed, 
this, in turn, holds similarly as in the usual case, since the densities cancel each other.

In the latter part we use the preceding observation together with the fact $p(\cdot)=\iota(\cdot)+1$ and \eqref{eq: mudef}.  
\end{proof}  

\end{comment}

\subsection*{Acknowledgments}

This work has received financial support from the V\"ais\"al\"a foundation, the Finnish Cultural Foundation and the Academy of Finland Project \# 268009.

\end{document}